\title[Topological symmetry groups and mapping class groups]
{Topological symmetry groups and mapping class groups for spatial graphs}
\author{Sangbum Cho}\thanks{The first-named author is supported in part by Basic Science Research Program through the National Research Foundation of Korea funded by the Ministry of Education, Science and Technology (2012R1A1A1006520).}
\address{
Department of Mathematics Education, Hanyang University, Seoul 133-791,
Korea}
\email{scho@hanyang.ac.kr}
\author{Yuya Koda}\thanks{The second-named author is supported in part by
Grant-in-Aid for Young Scientists (B) (No. 20525167), Japan Society for the Promotion of Science.} 
\address{Mathematical Institute, Tohoku University, Sendai, 980-8578, Japan}
\email{koda@math.tohoku.ac.jp}
\theoremstyle{plain}
\newtheorem*{theorem*}{Theorem}
\newtheorem*{lemma*} {Lemma}
\newtheorem*{corollary*} {Corollary}
\newtheorem*{proposition*}{Proposition}
\newtheorem*{conjecture*}{Conjecture}
\newtheorem{theorem}{Theorem}[section]
\newtheorem{lemma}[theorem]{Lemma}
\newtheorem{corollary}[theorem]{Corollary}
\newtheorem{proposition}[theorem]{Proposition}
\theoremstyle{remark}
\newtheorem*{remark}{Remark}
\newtheorem*{definition}{Definition}
\newtheorem*{notation}{Notation}
\newtheorem*{question}{Question}
\theoremstyle{definition}
\newcommand{\Integer}{\mathbb{Z}}
\newcommand{\Int}{\mathrm{Int}}
\newcommand{\MCG}{\mathcal{MCG}}
\newcommand{\Homeo}{\mathrm{Homeo}}
\newcommand{\Aut}{\mathrm{Aut}}
\newcommand{\TSG}{\mathrm{TSG}}
\newcommand{\rel}{\mathrm{rel}}
\newcommand{\id}{\mathrm{id}}
\newcommand{\Fix}{\mathrm{Fix}}
\begin{document}
\maketitle

\begin{abstract}
We give a necessary and sufficient condition for
the mapping class group of the pair of
the 3-sphere and a graph embedded in it to be
isomorphic to
the topological symmetry group of the embedded graph.

\end{abstract}

\vspace{1em}

\begin{small}
\hspace{2em}  \textbf{2010 Mathematics Subject Classification}:
57M15, 05C10; 57M25, 05C25, 57S05


\hspace{2em} 
\textbf{Keywords}:
topological symmetry group; mapping class group; spatial graph; group of 

\hspace{2em} homeomorphisms; tunnel number one knot; tunnel.
\end{small}

\section*{Introduction}

By a {\it graph}
we shall mean the underlying
space of a finite connected simplicial complex of dimension one.
A {\it spatial graph} is a graph embedded
in a $3$-manifold.
The theory of
spatial {graphs} is
a generalization of classical knot theory.
For a spatial graph $\Gamma$ in $S^3$,
the mapping class group $\MCG (S^3 , \Gamma)$
($\MCG_+ (S^3 , \Gamma)$, respectively)
is defined to be the group of isotopy classes
of the  self-homeomorphisms
(orientation-preserving  self-homeomorphisms,
respectively) of $S^3$ that preserve $\Gamma$ setwise.
The cardinality of the group describes how many
symmetries the spatial graph admits.
In \cite{Kod} it is shown that
the group $\MCG(S^3, \Gamma)$ is always
finitely presented.

On the other hand, Simon \cite{Sim86}
(see also \cite{FNPT05, FT11} for details)
introduced another concept, called the 
{\it topological symmetry group} of a 
spatial graph $\Gamma$ in $S^3$, 
denoted by
$\TSG(S^3, \Gamma)$, to describe the
symmetries of a spatial graph $\Gamma$ in $S^3$.
This group is defined to be the subgroup of
the automorphism group
of $\Gamma$ induced by homeomorphisms of the pair 
$(S^3, \Gamma)$. 
When we allow only orientation preserving
homeomorphisms, we get the
{\it positive topological symmetry group} $\TSG_+(S^3, \Gamma)$.

The aim of this paper is to provide complete
answers (Theorems \ref{thm:the mcg and the positive tsg}
and \ref{thm:involution and the symmetry group of a spatial graph})
to the following question:
\begin{question}
\label{question:topological symmetry groups and mapping class groups}
When is $\TSG (S^3, \Gamma)$ $(\TSG_+ (S^3, \Gamma)$, respectively$)$
isomorphic to $\MCG (S^3, \Gamma)$ $(\MCG_+ (S^3, \Gamma)$, respectively$)$?
\end{question}
We note that one of the answers 
for this question, i.e. Theorem \ref{thm:the mcg and the positive tsg}, 
implies that, if the group $\MCG_+ (S^3, \Gamma)$ 
is finite, then it is a finite subgroup of $SO(4)$ by \cite{FNPT05}.

\begin{notation}
Let $X$ be a subset of a given polyhedral space $Y$.
Throughout the paper, we will denote the interior of
$X$ by $\Int \thinspace X$.
We will use $N(X; Y)$ to denote a closed regular neighborhood of $X$ in $Y$.
If the ambient space $Y$ is clear from the context,
we denote it briefly by $N(X)$.
Let $M$ be a $3$-manifold.
Let $L \subset M$ be a submanifold with or without boundary.
When $L$ is of 1 or 2-dimension, we write
$E(L) = M \setminus \Int \thinspace N(L)$.
When $L$ is of 3-dimension, we write
$E(L) = M \setminus \Int \thinspace L$.
\end{notation}

\section{Mapping class groups and topological symmetry groups}

Throughout this paper, we will work in
the piecewise linear category.

\subsection{Mapping class groups}
\label{subsec:Mapping class groups}

Let $N$ be a possibly empty subspace
of a compact orientable  manifold $M$.
We will denote by
\[ \Homeo (M, N )~(\Homeo (M ~\rel~N),~\mbox{respectively}) \]
the space of
self-homeomorphisms of
$M$ preserving $N$ setwise (preserving $N$ pointwise, respectively).
We call the group
\[ \pi_0 (\Homeo (M, N ))~( \pi_0 (\Homeo (M ~\rel~ N)),
~\mbox{respectively}), \]
that is, the group of isotopy classes of
elements of
$\Homeo (M , N)$ ($\Homeo (M ~\rel~ N)$, respectively),
where the isotopies are required to preserve $N$ setwise 
(to preserve $N$ pointwise, respectively),
a {\it mapping class group} and denoted it by
\[\MCG (M , N) ~(\MCG (M ~\rel~ N),~\mbox{respectively}).\]
The ``plus" subscripts, for instance in
$\Homeo_+ (M, N)$
and
$\MCG_+ (M, N)$,
indicate the subgroups of
$\Homeo (M, N )$ and
$\MCG (M, N )$ consisting of
orientation-preserving homeomorphisms and their isotopy classes, respectively. 	

\vspace{1em}

By a {\it graph}
we shall mean the underlying
space of a finite connected simplicial complex of dimension one. 
A point $x$ in a graph is called
a {\it vertex} if
$x$ does not have an open neighborhood that
is homeomorphic to
an open interval.
We denote by $v(G)$ the set of all
vertices of a graph $G$.
Throughout the paper a graph is always assumed not to have
any valency one vertices, i.e. any vertex admits no
open neighborhood homeomorphic to $[0,1)$.
The closure of each component of
$G \setminus v(G)$ is called
an {\it edge}. 
An edge $e$ of a graph $\Gamma$ is called a {\it cut edge} if 
$\Gamma \setminus \Int \thinspace e$ is disconnected. 
We note that the above definition of a graph 
allows multiple edges between two vertices, 
and an edge from a vertex to itself (i.e. a {\it loop}), and
the above definition of a vertex excludes vertices of valency two. 
These definitions are different
from the usual definitions of a graph and a vertex.

A {\it spatial graph} $\Gamma$ is a graph embedded
in a $3$-manifold $M$.
Two spatial graphs are said to be equivalent
if one can be transformed into the other by
an ambient isotopy of the $3$-manifold.
Note that a knot $K$ in $M$ is also a spatial graph.

For a spatial graph $\Gamma$ in $S^3$, the group
$\MCG(S^3, \Gamma)$ describes the symmetries of $\Gamma$.
For a knot $K$ in $S^3$, the group
$\MCG(S^3, K)$ is called the {\it symmetry group} of $K$,
see \cite{Kaw96}.

In \cite{Kod}, the following is proved:
\begin{theorem}[\cite{Kod}]
\label{thm: symmetry group of a graph is finitely presented}
For a spatial graph $\Gamma$ in $S^3$,
the group
$\MCG (S^3 , \Gamma)$
is finitely presented.
\end{theorem}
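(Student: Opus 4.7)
The plan is to reduce the statement to the known finite presentability of mapping class groups of compact $3$-manifolds. First, consider the natural homomorphism
\[
\rho \colon \MCG(S^3, \Gamma) \longrightarrow \Aut(\Gamma),
\]
where $\Aut(\Gamma)$ is the (finite) automorphism group of the abstract graph $\Gamma$; its image is therefore a finite group. The kernel $K := \Ker \rho$ consists of classes that fix each vertex and preserve each edge setwise. Up to a further finite quotient accounting for flips of loop edges, followed by standard isotopy along each arc edge, $K$ is a finite extension of $K_0 := \MCG(S^3 ~\rel~ \Gamma)$. Since extensions of finitely presented groups by finite groups are finitely presented, this reduces the problem to the finite presentability of $K_0$.

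Next, I would identify $K_0$ with a mapping class group of the compact $3$-manifold exterior $E(\Gamma) = S^3 \setminus \Int N(\Gamma)$ equipped with prescribed boundary data. By a PL regular neighborhood argument, a representative $h$ fixing $\Gamma$ pointwise can be isotoped rel $\Gamma$ so that it preserves $N(\Gamma)$ setwise and respects its natural handle decomposition ($0$-handles at vertices and $1$-handles along edges). Such an $h$ restricts to a self-homeomorphism of $E(\Gamma)$ that preserves the induced decomposition of $\partial E(\Gamma)$ into meridional annuli (one per edge) and planar pieces (one per vertex). Conversely, any such homeomorphism of $E(\Gamma)$ extends across $N(\Gamma)$ in an essentially unique way up to isotopy. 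This identifies $K_0$, up to a finite extension by meridional Dehn twists, with the subgroup of $\MCG(E(\Gamma))$ preserving the finite collection $\mathcal{F}$ of meridional annuli on $\partial E(\Gamma)$.

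Finally, I would invoke the theorem that the mapping class group of a compact orientable $3$-manifold is finitely presented, which combines Hatcher's results on Haken $3$-manifolds, McCullough's finiteness theorems, and the prime and JSJ decompositions. The subgroup of $\MCG(E(\Gamma))$ preserving the finite family $\mathcal{F}$ up to isotopy (together with the permutation action on its components) is of finite index in its normalizer, hence inherits finite presentability, which then propagates back through the two finite extensions above. The main obstacle I expect is the bookkeeping in step two: verifying that every self-homeomorphism of $E(\Gamma)$ preserving $\mathcal{F}$ up to isotopy genuinely lifts to a homeomorphism of the pair $(S^3, \Gamma)$, and confirming that the meridional Dehn twists one must quotient by form a finitely generated normal subgroup so that finite presentability is preserved at each reduction.
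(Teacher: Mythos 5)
The paper itself does not prove this theorem; it is quoted from \cite{Kod}. The intended argument, whose ingredients are reassembled in Sections 1.3 and 2 of the present paper, is the isomorphism $\MCG(S^3,\Gamma)\cong\MCG(E(\Gamma),\underline{\underline{m(\Gamma)}})$ for the explicit complete and useful boundary pattern $\underline{\underline{m(\Gamma)}}$, followed by the finite presentability of mapping class groups of Haken $3$-manifolds with boundary patterns (Johannson, McCullough). Your overall architecture --- pass to the compact exterior carrying marked boundary data and invoke $3$-manifold mapping class group finiteness --- is the same one.

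The genuine gap is in your final step. You reduce to the subgroup $H\leqslant\MCG(E(\Gamma))$ preserving the family $\mathcal{F}$ of meridional annuli and assert that $H$ ``is of finite index in its normalizer, hence inherits finite presentability.'' This is a non sequitur: being of finite index in one's normalizer has no bearing on finite presentability, and finite presentability does not pass to arbitrary subgroups of finitely presented groups (Stallings--Bieri kernels are finitely generated but not finitely presented subgroups of finitely presented groups). Moreover $H$ is in general of \emph{infinite} index in $\MCG(E(\Gamma))$, since the orbit of the isotopy class of $\mathcal{F}$ can be infinite, so no finite-index argument is available. What is actually needed is the theorem that the mapping class group of a Haken $3$-manifold equipped with a (complete and useful) boundary pattern is finitely presented; this is precisely what the boundary-pattern formalism of Section 1.3 is set up to deliver, and the unnamed Lemma of Section 2 gives $\MCG(S^3,\Gamma)\cong\MCG(E(\Gamma),\underline{\underline{m(\Gamma)}})$ on the nose, leaving no residual quotient or extension to manage. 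Two smaller inaccuracies: the subgroup generated by meridional Dehn twists is infinite (finitely generated abelian), so your ``finite extension'' is not finite --- the reduction can still be salvaged because that kernel is finitely presented, but that is the hypothesis you must actually verify, since finite presentability of $G$ does not follow from finite presentability of $G/N$ together with mere finite generation of $N$. These last points are repairable; the finite-index claim in the concluding step is, as written, a real gap.
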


\subsection{Topological symmetry groups of graphs}
\label{subsec:Topological symmetry groups of graphs}

Let $\Gamma$ be a graph.
Let $X$ be a 1-dimensional simplicial complex such that $\Gamma = |X|$.
We denote by $\Aut (\Gamma)$ the group of
all simplicial automorphisms of the simplicial complex $X$. It is clear that
the group $\Aut (\Gamma)$ does not depend on the choice of $X$.
Let $\Gamma$ be a spatial graph in $S^3$.
The {\it topological symmetry group} $\TSG (S^3, \Gamma)$ and
the {\it positive topological symmetry group} $\TSG_+ (S^3, \Gamma)$
of the spatial graph $\Gamma$ in $S^3$
are subgroups of $\Aut(\Gamma)$ defined as follows:
\begin{eqnarray*}
\TSG(S^3, \Gamma) = \{
f \in \Aut(\Gamma) \mid \mbox{there exists }\tilde{f} \in
\Homeo(S^3, \Gamma) \mbox{ such that } \tilde{f}|_{\Gamma} \mbox{ induces }  f 
\},
\end{eqnarray*}
\begin{eqnarray*}
 \TSG_+(S^3, \Gamma) = \{
f \in \Aut(\Gamma) \mid \mbox{there exists }\tilde{f} \in
\Homeo_+(S^3, \Gamma) \mbox{ such that } \tilde{f}|_{\Gamma} \mbox{  induces } f 
\}.
\end{eqnarray*}
These groups are
originally defined by Simon \cite{Sim86}. 
See \cite{FNPT05, FT11} for details.
Obviously, the group $\TSG (S^3, \Gamma)$ is
a finite group.

The following proposition is a straightforward 
consequence of the definitions.

\begin{lemma}
\cite{Kod}
\label{pro:symmetry groups and topological symmetry groups}
Let $\Gamma$ be a spatial graph in $S^3$.
There is an exact sequence
\[ 1 \to \MCG(S^3 ~\rel~ \Gamma) \to \MCG (S^3, \Gamma) \to
\TSG (S^3, \Gamma) \to 1 \]
\[( 1 \to \MCG_+(S^3 ~\rel~ \Gamma) \to \MCG_+(S^3, \Gamma) \to
\TSG_+ (S^3, \Gamma) \to 1,
respectively.)\]
Hence $\MCG(S^3, \Gamma) \cong \TSG(S^3, \Gamma)$
$(\MCG_+(S^3, \Gamma) \cong \TSG_+(S^3, \Gamma)$, respectively$)$
if and only if
$\MCG(S^3 ~\rel~ \Gamma) \cong 1$
$(\MCG_+(S^3 ~\rel~ \Gamma) \cong 1$, respectively$)$.
\end{lemma}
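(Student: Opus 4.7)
The plan is to construct the restriction homomorphism $\Phi : \MCG(S^3, \Gamma) \to \TSG(S^3, \Gamma)$, $[\tilde{f}] \mapsto \tilde{f}|_\Gamma$, and the forgetful map $\iota : \MCG(S^3 \rel \Gamma) \to \MCG(S^3, \Gamma)$, then verify exactness of the four-term sequence at each spot. Well-definedness of $\Phi$ rests on the observation that vertices of $\Gamma$ are topologically intrinsic (they are exactly the points with no Euclidean open neighborhood in $\Gamma$), so any self-homeomorphism of $\Gamma$ permutes vertices and edges and therefore induces a simplicial automorphism in $\Aut(\Gamma)$; since the vertex set is finite, this automorphism is locally constant on $\Homeo(\Gamma)$, so isotopic representatives in $\Homeo(S^3, \Gamma)$ induce the same element of $\Aut(\Gamma)$. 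Surjectivity of $\Phi$ onto $\TSG(S^3, \Gamma)$ is immediate from the definition of the latter.

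The crucial step is the identification $\Ker(\Phi) = \iota(\MCG(S^3 \rel \Gamma))$, and here the key auxiliary fact is that the identity component of $\Homeo(\Gamma)$ is contractible: its elements fix each vertex and preserve each edge setwise, so each edge-restriction is an orientation-preserving PL self-homeomorphism of $[0,1]$ fixing $\{0,1\}$, and the space of such is contractible via the convex straight-line homotopy $(1-s)\varphi + s\,\id$ in a linear parametrization. Given $[\tilde f] \in \Ker(\Phi)$, one picks an isotopy $H_t$ in $\Homeo(\Gamma)$ from $\tilde{f}|_\Gamma$ to $\id_\Gamma$, views it as a path of PL embeddings of $\Gamma$ into $S^3$ with constant image $\Gamma$, and applies the PL isotopy extension theorem to obtain an ambient isotopy of $S^3$ preserving $\Gamma$ setwise that deforms $\tilde{f}$ to a homeomorphism fixing $\Gamma$ pointwise. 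For injectivity of $\iota$, given $\tilde{f}, \tilde{g} \in \Homeo(S^3 \rel \Gamma)$ joined by a path $\Psi_t$ in $\Homeo(S^3, \Gamma)$, the loop $\Psi_t|_\Gamma$ in the identity component of $\Homeo(\Gamma)$ is null-homotopic by the same contractibility, and a parametrized application of PL isotopy extension converts this null-homotopy into a correction of $\Psi_t$ making each restriction equal to $\id_\Gamma$, producing the desired isotopy rel $\Gamma$.

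The orientation-preserving version of the exact sequence follows by running the same argument inside $\Homeo_+$, since all ambient isotopies produced by isotopy extension lie in the identity component of $\Homeo(S^3)$ and are therefore orientation-preserving. The final equivalence $\MCG(S^3,\Gamma)\cong\TSG(S^3,\Gamma)\Leftrightarrow\MCG(S^3\rel\Gamma)\cong 1$ is then purely formal from the exactness of $1\to A\to B\to C\to 1$. The main technical point I expect to require care is the application of PL isotopy extension in both the one-parameter form (for the kernel identification) and the two-parameter form (for the injectivity of $\iota$); both are standard in the PL category for the finite subcomplex $\Gamma\subset S^3$, but they must be set up so that each extended isotopy genuinely preserves $\Gamma$ setwise throughout, which is the bridge between the \emph{setwise preserving} and \emph{pointwise fixing} worlds.
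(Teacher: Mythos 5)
The paper never actually proves this lemma: it is imported from \cite{Kod} and introduced with the remark that it is a ``straightforward consequence of the definitions,'' so there is no in-paper proof to compare against step by step. Your proposal supplies precisely the argument that this phrase elides, and it is essentially correct. You rightly identify the two points with real content: exactness at $\MCG(S^3,\Gamma)$, where an element of $\ker\Phi$ restricts on $\Gamma$ to a map in the identity component of $\Homeo(\Gamma)$ and the one-parameter PL isotopy extension theorem promotes the edgewise straight-line isotopy to an ambient isotopy through $\Homeo(S^3,\Gamma)$ ending at a map fixing $\Gamma$ pointwise; and injectivity of $\iota$, which is genuinely a statement about $\pi_1$ of the base of the restriction fibration $\Homeo(S^3,\Gamma)\to\Homeo(\Gamma)$ rather than a formality. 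Reducing both to the contractibility of the identity component of $\Homeo(\Gamma)$ together with parametrized isotopy extension is the standard and correct route, and the final equivalence is indeed purely formal.

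Two caveats. First, your key contractibility claim fails exactly when $\Gamma$ is a knot: the paper explicitly declares a knot to be a spatial graph and states the lemma without excluding that case, but then $\Gamma$ has no vertices, the identity component of $\Homeo(\Gamma)=\Homeo(S^1)$ is homotopy equivalent to $S^1$, and the connecting map $\pi_1(\Homeo(\Gamma),\id)\to\pi_0(\Homeo(S^3 ~\rel~ \Gamma))$ is no longer obviously trivial, so your injectivity argument for $\iota$ breaks down there (and the definition of $\Aut(\Gamma)$ via triangulations is itself ambiguous for a circle). You should either restrict to graphs with at least one vertex, as the paper's main theorems effectively do, or handle the knot case separately. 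Second, for a loop edge the restriction of an identity-component homeomorphism is a self-map of a circle fixing only the base vertex; you should note explicitly that such a map preserves orientation (being connected to the identity in $\Homeo(\Gamma)$), so that cutting at the vertex puts it into the contractible space of boundary-fixing self-homeomorphisms of $[0,1]$. With those points addressed, the proof is complete.
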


By this lemma, to answer the question in the Introduction,
it is enough to determine when $\MCG(S^3 ~\rel~ \Gamma)$
(or $\MCG_+ (S^3 ~\rel~ \Gamma)$) is trivial.

\subsection{Review on boundary patterns}
\label{subsec:Review on boundary patterns}

In this subsection, we review the notion of
boundary pattern defined in
\cite{Joh79} and \cite{McC91}.
Let $M$ be a compact $3$-manifold.
A {\it boundary pattern} for $M$ consists of a set
$\underline{\underline{m}}$ of compact connected surfaces in
$\partial M$, such that the intersection of any $i$ of them 
is empty or consists of $(3 - i)$-manifolds.
A boundary pattern is said to be {\it complete} when
$\bigcup_{B \in \underline{\underline{m}}} B = \partial M$.

A boundary pattern $\underline{\underline{m}}$
of a $3$-manifold $M$ is said to be {\it useful}
if any disk $D$ properly embedded in $M$, where
$\partial D$ intersects $\partial B$
transversely for each $B \in \underline{\underline{m}}$ and
$\# (D \cap ( \bigcup_{B \in \underline{\underline{m}}}
\partial B)) \leqslant 3$,
bounds a disk $E$ in $\partial M$ such that
$E \cap ( \bigcup_{B \in \underline{\underline{m}}} \partial B )$
is the cone on
$\partial D \cap ( \bigcup_{B
\in \underline{\underline{m}}} \partial B ) $.

A $3$-manifold $M$ with  a
complete boundary pattern $\underline{\underline{m}}$ 
is said to be {\it simple} if it satisfies the following
three conditions:
\begin{enumerate}
\item
$M$ is irreducible and each element $B$ of $\underline{\underline{m}}$
is incompressible, 
\item
Every incompressible torus in $M$ is
parallel to an element of $\underline{\underline{m}}$ which is a torus, and 
\item
Any annulus $A$ in $M$ with $\partial A \cap
(\bigcup_{B \in \underline{\underline{m}}} \partial B ) = \emptyset$
is either compressible or parallel into 
an element $B$ of $\underline{\underline{m}}$.
\end{enumerate}


The mapping class group
of a manifold $M$ with a boundary pattern $\underline{\underline{m}}$,
denoted by $\MCG(M , \underline{\underline{m}})$,
is the group
$\MCG (M , B_1 , B_2 , \ldots , B_k)$,
if $\underline{\underline{m}} = \{ B_1 , B_2 , \ldots , B_k \}$.

\begin{theorem}[\cite{Joh79} Proposition 27.1]
\label{thm:Joh79 Prop 27.1}
Let $(M, \underline{\underline{m}})$ be a
simple $3$-manifold with complete and useful boundary pattern.
Then $\MCG(M, \underline{\underline{m}})$ is finite.
\end{theorem}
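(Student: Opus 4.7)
The plan is to follow the program of Johannson's book using the theory of characteristic submanifolds. The first step is to analyze the characteristic submanifold $V = V(M, \underline{\underline{m}})$ under the simpleness hypothesis. Condition (1) guarantees that $V$ is well-defined and unique up to admissible isotopy (this is the general existence theorem for characteristic submanifolds of irreducible Haken manifolds with incompressible boundary pattern). Conditions (2) and (3) rule out, respectively, essential tori not parallel to torus elements of $\underline{\underline{m}}$ and essential annuli not parallel into $\underline{\underline{m}}$. Since every non-trivial Seifert-fibered or $I$-bundle component of $V$ comes equipped with essential annuli or tori produced by its fibration, these hypotheses collectively force $V$ to be admissibly isotopic to a disjoint union of regular neighborhoods of the torus components of $\underline{\underline{m}}$.

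The second step is to apply Johannson's admissible enclosing theorem: every class in $\MCG(M, \underline{\underline{m}})$ has a representative preserving $V$ setwise. Combined with the triviality established in the first step, one may arrange such a representative to be the identity on a collar neighborhood of each torus element of $\underline{\underline{m}}$. The admissible twists along these collars form a finite group (the admissible mapping class group of a torus relative to a parallel copy is trivial, once one accounts for the finite group of affine automorphisms respecting the pattern), so the finiteness question reduces to that of the admissible mapping class group of the complementary pair.

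The final step is to invoke Waldhausen's rigidity theorem: for the Haken manifold $M$ with incompressible pattern, admissible isotopy classes of admissible homeomorphisms are detected by outer automorphisms of $\pi_1(M)$ preserving the peripheral structure induced by $\underline{\underline{m}}$. Johannson shows, by induction along a hierarchy of admissible essential surfaces whose termination is guaranteed by Haken finiteness, that the resulting subgroup of $\mathrm{Out}(\pi_1(M))$ is finite whenever $V$ is trivial in the sense above. The main obstacle is the first step: one must carefully verify that no exotic Seifert-fibered or $I$-bundle component of $V$ survives under the three simpleness conditions, and treat the short list of small base orbifolds where the generic argument must be modified by hand. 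Once this case analysis is in place, the enclosing theorem and Waldhausen rigidity combine to yield the finiteness of $\MCG(M, \underline{\underline{m}})$.
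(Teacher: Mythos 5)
The paper offers no proof of this statement: it is quoted directly from Johannson's book (Proposition 27.1 of \cite{Joh79}) and used as a black box, so there is nothing in the paper itself to compare your argument against. Judged on its own terms, your outline assembles the right ingredients from Johannson's theory --- the characteristic submanifold, the enclosing theorem, Waldhausen-type rigidity, and induction along an admissible hierarchy --- and your first step (that the simpleness conditions force the characteristic submanifold to be admissibly isotopic to regular neighbourhoods of the torus elements of the pattern) is essentially Johannson's characterization of simple manifolds, so that part is sound. The remark that collar twists along boundary-parallel tori are admissibly isotopic to the identity is also correct, since the isotopies are not required to fix $\partial M$ pointwise.

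The gap is in your final step, which is where the entire content of the theorem lives. You write that ``Johannson shows, by induction along a hierarchy of admissible essential surfaces \dots\ that the resulting subgroup of $\mathrm{Out}(\pi_1(M))$ is finite whenever $V$ is trivial.'' That is precisely the statement to be proved, not a lemma you may cite; as written the argument is circular. To close it you would need (a) the reduction to $\mathrm{Out}(\pi_1(M))$ via rigidity, which requires $M$ to be Haken --- automatic here once $\partial M \neq \emptyset$ and the pattern is complete and useful, except for the $3$-ball, which must be treated separately as the base case of the hierarchy induction (the admissible mapping class group of a ball with a complete useful boundary pattern is finite by an elementary combinatorial argument); and (b) the inductive step itself: in a simple manifold there are only finitely many admissible isotopy classes of essential surfaces realizing a given admissible homotopy class (this is exactly where the absence of essential annuli and tori is used, since annulus and torus modifications are what generate infinite families of isotopic-but-not-admissibly-isotopic surfaces), and the stabilizer of such a surface maps with finite kernel into the admissible mapping class group of the manifold split along it, which is again simple. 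Neither of these points is routine, and both are absent from your sketch; without them the proposal is an annotated table of contents for Johannson's Chapter~27 rather than a proof.
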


\section{Positive topological symmetry groups and
positive mapping class groups}
\label{sec:Positive topological symmetry groups and
positive mapping class groups}

Let $V$ be a handlebody and let $D_1 , D_2 , \ldots , D_n$ be 
mutually disjoint, mutually non-parallel 
essential disks in $V$. 
Suppose that $V \setminus ( \bigcup_{i=1}^n \Int \thinspace N(D_i ; V) )$ 
consists of 3-balls. 
Then there exists a graph $\Gamma$ embedded in $\Int \thinspace V$ 
such that 
\begin{enumerate}
\item
$\Gamma$ is a deformation retract of $V$; 
\item
$\Gamma$ intersects $\bigcup_{i=1}^n D_i$ 
transversely at the points in the interior of the 
edges of $\Gamma$; and   
\item
$\Gamma$ has exactly $n$ edges 
$e_1 , e_2 , \ldots , e_n$ and 
$ \# ( e_i \cap D_j ) = \delta_{ij}$, where 
$\delta_{ij}$ is the Kronecker delta. 
\end{enumerate}
We call the above graph $\Gamma$ 
the {\it dual graph} of $\{D_1 , D_2 , \ldots , D_n \}$. 
Note that such a graph is uniquely determined up to isotopy. 
\begin{lemma}[\cite{Kod}]
\label{lem:triviality of a mapping class group}
Let $V$ be a handlebody and let $\Gamma$ 
be a graph embedded in $V$ which is a deformation retract of $V$. 
Then the group $\MCG (V , \Gamma ~\rel~ \partial V)$ 
is trivial.  
\end{lemma}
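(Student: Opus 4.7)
The plan is to use a system of meridian disks dual to $\Gamma$ to cut $V$ into $3$-balls, reducing triviality of $\MCG(V,\Gamma~\rel~\partial V)$ to an Alexander-trick argument on $3$-balls containing embedded trees. Since $\Gamma$ is a deformation retract of $V$, I may choose a system $\{D_1,\ldots,D_n\}$ of mutually disjoint, pairwise non-parallel essential disks in $V$ whose dual graph is $\Gamma$; cutting $V$ along $\bigcup_i D_i$ yields a disjoint union of $3$-balls $B_1,\ldots,B_m$, one for each vertex $v_j$ of $\Gamma$, with each $B_j$ meeting $\Gamma$ in a star $T_j$ centered at $v_j$. Let $h \in \Homeo(V,\Gamma~\rel~\partial V)$ represent a class in the mapping class group; the goal is to construct an isotopy from $h$ to $\id_V$ within $\Homeo(V,\Gamma~\rel~\partial V)$.

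\emph{Step 1: arrange $h$ to fix the disk system.} Since $h|_{\partial V}=\id$, we have $\partial h(D_i)=\partial D_i$, and since $h$ preserves $\Gamma$ setwise while $D_i\cap\Gamma$ is a single interior point of $e_i$, each $h(D_i)$ meets $\Gamma$ transversely at a single point. After putting $h(D_i)$ and $\bigcup_j D_j$ in general position, I would run a $\Gamma$-equivariant innermost disk / outermost arc reduction in $V$ to remove all intersections between $h(D_i)$ and $D_j$ for $j\neq i$, through an isotopy fixing $\partial V$ and preserving $\Gamma$. Once disjoint from the other $D_j$'s, $h(D_i)$ and $D_i$ cobound a $3$-ball meeting $\Gamma$ in at most one arc, across which $h(D_i)$ is isotoped onto $D_i$. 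A further Alexander trick on each $D_i$ — using that $h|_{D_i}$ fixes $\partial D_i$ pointwise as well as the unique point $D_i\cap\Gamma$ — arranges $h|_{D_i}=\id$ for every $i$.

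\emph{Step 2: triviality on the complementary balls, and main obstacle.} With $h$ now the identity on $\partial V\cup\bigcup_i D_i$, the restriction $h|_{B_j}$ fixes $\partial B_j$ pointwise and preserves the star $T_j$ setwise, with the endpoints of $T_j$ on $\partial B_j$ already fixed. Since each edge of $T_j$ has one fixed endpoint on $\partial B_j$ and shares the central vertex $v_j$, I can first isotope $h|_{B_j}$ rel $\partial B_j$ to be the identity on $T_j$; then the Alexander trick applied to the $3$-ball $E(T_j)=B_j\setminus\Int N(T_j)$ (whose boundary is entirely fixed) isotopes $h|_{B_j}$ to $\id_{B_j}$ rel $\partial B_j$ while preserving $T_j$ throughout. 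Gluing these isotopies across all $j$ produces the desired isotopy of $h$ to $\id_V$ in $\Homeo(V,\Gamma~\rel~\partial V)$. The main obstacle is Step 1, a $\Gamma$-equivariant disk uniqueness statement: one must check that the innermost disk surgeries reducing $|h(D_i)\cap D_j|$, as well as the final push across the cobounded $3$-ball, can be performed without disturbing $\Gamma$. The key observation is that the intersection curves of $h(D_i)\cap D_j$ all lie in $V\setminus\Gamma$ (the single $\Gamma$-point on each disk is isolated), so the standard surgeries go through equivariantly.
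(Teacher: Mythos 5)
Your overall strategy (cut $V$ along the dual disk system and reduce to Alexander-trick arguments on the complementary pieces) is the natural one and is presumably the route of the cited source \cite{Kod}; the paper itself only quotes the lemma. But as written there are genuine gaps. The main one sits exactly where you flag ``the main obstacle,'' and your one-line resolution does not address it. The difficulty is not that the intersection circles of $h(D_i)$ with the disk system lie in $V\setminus\Gamma$; it is the final push of $h(D_i)$ onto $D_i$ across the ball $B$ they cobound. For that push to preserve $\Gamma$ you must know (i) that $B\cap\Gamma$ is a single arc in the interior of the single edge $e_i$ (a priori $h$ could carry $p_i=D_i\cap\Gamma$ into a different edge, and $B$ could contain vertices of $\Gamma$), and (ii) that this arc is \emph{unknotted} in $B$: if $(B,B\cap\Gamma)$ were a knotted ball--arc pair, no isotopy carrying $h(D_i)$ to $D_i$ across $B$ could preserve $\Gamma$. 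Neither point follows from the intersection curves avoiding $\Gamma$; both require the spine hypothesis. For (i), $B\cap\Gamma$ is a subtree of $\Gamma$ meeting $\partial B$ in two points (it contains no loop since $\pi_1(\Gamma)\to\pi_1(V)$ is injective and $B$ is a ball), and a tree with two leaves and no valence-one vertices of $\Gamma$ is an arc without branch points, hence vertex-free. For (ii), use $V\setminus\Int\thinspace N(\Gamma)\cong\partial V\times[0,1]$ and observe that $B\setminus\Int\thinspace N(\Gamma)$ is the parallelism region of the boundary-parallel incompressible annulus $(D_i\cup h(D_i))\setminus\Int\thinspace N(\Gamma)$, hence a solid torus, so the arc exterior is a solid torus and the pair is standard. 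Without an argument of this kind Step 1 is unproved.

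In Step 2, $E(T_j)=B_j\setminus\Int\thinspace N(T_j)$ is not a $3$-ball: if $v_j$ has valence $k\geqslant 3$ it is homeomorphic to a $k$-holed sphere times $[0,1]$, a handlebody of genus $k-1$, so ``the Alexander trick applied to the $3$-ball $E(T_j)$'' does not parse, and triviality of the mapping class group of a positive-genus handlebody rel boundary is itself a nontrivial fact. The correct move is to apply the Alexander coning isotopy to the ball $B_j$ itself, coned from $v_j$ with $T_j$ in radial position: since $T_j$ is star-shaped about $v_j$, the coning isotopy preserves $T_j$ at every time and kills $h|_{B_j}$ rel $\partial B_j$ directly (making your preliminary straightening of $h$ on $T_j$ unnecessary). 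A smaller point: $\MCG(D_i,\{p_i\}~\rel~\partial D_i)$ is infinite cyclic, generated by the twist about $p_i$, so fixing $\partial D_i$ and $p_i$ does not by itself give $h|_{D_i}=\id$ up to isotopy; one must add that this twist is undone by an ambient isotopy of $V$ supported in $N(D_i)$ and preserving $e_i$.
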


Let $\Gamma$ be a spatial graph in $S^3$. 
Set $V = N(\Gamma)$. 
Let 
\[
\Delta = \{ D_1 , D_2 , \ldots , D_{n_1}, 
D_1^\prime , D_2^\prime , \ldots , D_{n_2}^\prime, 
D_1^{\prime \prime} , D_2^{\prime \prime} , \ldots , D_{n_3}^{\prime\prime} 
 \} 
\] 
be the family of 
essential disks in $V$ such that 
\begin{itemize}
\item 
$\Gamma$ is a dual graph of $\Delta$, 
\item
the set $\{D_1, D_2, \ldots , D_{n_1}\}$ corresponds to the set of 
loops of $\Gamma$, 
\item
the set $\{D_1^\prime, D_2^\prime, \ldots , D_{n_2}^\prime\}$ 
corresponds to the set of cut edges of $\Gamma$. 
\item
the set $\{D_1^{\prime \prime}, D_2^{\prime \prime}, \ldots , 
D_{n_3}^{\prime \prime}\}$ 
corresponds to the set of non-loop, non-cut edges of $\Gamma$. 
\end{itemize}
For each $1 \leqslant i \leqslant n_1$, 
we take parallel copies 
$D_{i,1}$, $D_{i,2}$, $D_{i,3}$, $D_{i,4}$ of $D_i$, and 
for each $1 \leqslant i \leqslant n_3$, 
we take parallel copies 
$D_{i,1}^{\prime \prime}$, $D_{i,2}^{\prime \prime}$ 
of $D_i^{\prime \prime}$ 
so that all the disks $D_{i,j}$, $D^\prime$, $D^{\prime \prime}_{i,j}$ 
are mutually disjoint.  
Denote by $\Delta^\prime$ the collection of disks 
\[ 
\{ D_{i, j} \mid 1 \leqslant i \leqslant n_1, j= 1,2,3,4 \}
\cup 
\{ D_1^\prime, D_2^\prime, \ldots, D_{n_1}^\prime \} 
\cup \{ D_{i, j}^{\prime \prime} \mid 
1 \leqslant i \leqslant n_3, j= 1,2 \} .
\] 
Let $\underline{\underline{m(\Gamma)}}$ be the set of 
the closure of each component of 
$\partial N(\Gamma) \setminus \bigcup_{D \in \Delta^\prime} \partial D$.
Then $\underline{\underline{m (\Gamma)}}$ 
is a complete and useful boundary-pattern of 
$E(\Gamma) = E(V)$, see Figure \ref{fig:graph_and_boundary-pattern}. 
\begin{figure}[htbp]
\begin{center}
\includegraphics[width=12cm,clip]{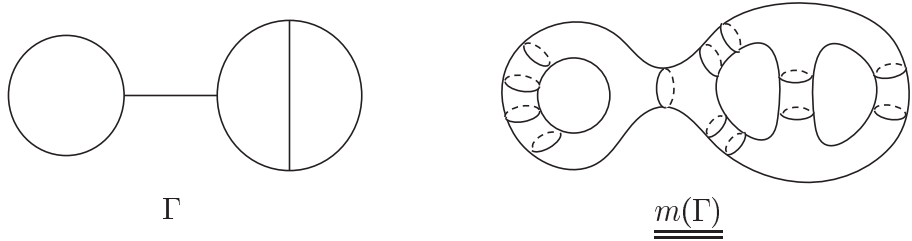}
\caption{A spatial graph $\Gamma$ and the corresponding 
boundary-pattern $\underline{\underline{m(\Gamma)}}$.}
\label{fig:graph_and_boundary-pattern}
\end{center}
\end{figure}

\begin{lemma}
Let $\Gamma \subset S^3$ be a spatial graph.
Then we have
\begin{eqnarray*}
\MCG(S^3, \Gamma) &\cong& \MCG(E(\Gamma) ,  \underline{\underline{ m (\Gamma)}})
, \\
\MCG_+(S^3, \Gamma) &\cong& \MCG_+(E(\Gamma) , \underline{\underline{ m (\Gamma)}})
.
\end{eqnarray*}
\end{lemma}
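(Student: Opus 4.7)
The plan is to show that the restriction map $[f] \mapsto [f|_{E(\Gamma)}]$ realizes the stated isomorphisms, with inverse given by extending a boundary-pattern-preserving homeomorphism of $E(\Gamma)$ across $V = N(\Gamma)$. The same argument will yield the $\MCG_+$ version, since an orientation of $S^3$ restricts to $V$ and $E(\Gamma)$, and every step below preserves orientation-preservingness.

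To set up the forward direction, I would first use uniqueness of regular neighborhoods to isotope any $f \in \Homeo(S^3, \Gamma)$ so that $f(V) = V$. The automorphism of $\Gamma$ induced by $f$ preserves the combinatorial edge types (loop, cut, or non-loop non-cut), so a further isotopy of $f|_V$ through $\Gamma$-preserving homeomorphisms arranges $f(\Delta) = \Delta$, and then $f(\Delta') = \Delta'$ by matching the prescribed numbers of parallel copies for each type. The restriction $f|_{E(\Gamma)}$ then permutes the elements of $\underline{\underline{m(\Gamma)}}$, giving a homomorphism $\Phi$ to $\MCG(E(\Gamma), \underline{\underline{m(\Gamma)}})$; well-definedness on isotopy classes follows because an ambient isotopy can be adjusted to preserve $V$ and $\Delta'$ throughout.

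Injectivity of $\Phi$ would come directly from Lemma \ref{lem:triviality of a mapping class group}. Given $f_0, f_1$ whose restrictions are isotopic in $\MCG(E(\Gamma), \underline{\underline{m(\Gamma)}})$, I would first extend such an isotopy across a collar of $\partial V$ by the identity to reduce to the case $f_0|_{E(\Gamma)} = f_1|_{E(\Gamma)}$. Then $f_1^{-1} \circ f_0$ restricted to $V$ fixes $\partial V$ pointwise and preserves $\Gamma$ setwise, so Lemma \ref{lem:triviality of a mapping class group} gives an isotopy to the identity rel $\partial V$ through $\Gamma$-preserving maps; extending this isotopy by the identity on $E(\Gamma)$ proves $[f_0] = [f_1]$.

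The main obstacle will be surjectivity. Given $g \in \Homeo(E(\Gamma), \underline{\underline{m(\Gamma)}})$, the plan is to recover the curves $\bigcup_{D \in \Delta'} \partial D$ on $\partial V$ as the frontier of the boundary pattern so that $g|_{\partial V}$ permutes them; to extend $g$ across each disk $D \in \Delta'$ using that an essential disk in a handlebody is determined up to isotopy by its boundary; and then to observe that $V$ cut along $\Delta'$ is a disjoint union of three-balls (combining the dual-graph property of $\Delta$ with the parallel-copy structure), across each of which $g$ extends by Alexander's trick. The resulting $\tilde{g}$ preserves $\Delta$ after amalgamating parallel copies, and one final isotopy rel $\partial V$ arranges that $\tilde{g}$ preserves the dual graph $\Gamma$. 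The delicate point is matching the parallel copies and three-ball regions consistently so that the extension is well defined across all of $V$; the underlying combinatorics, while routine, must be tracked carefully to ensure $\tilde{g}$ is a genuine homeomorphism and not merely piecewise defined.
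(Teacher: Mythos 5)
Your proposal is correct and follows essentially the same route as the paper, which simply factors both isomorphisms through $\MCG(S^3, V, \bigcup_{D\in\Delta'}D)$, citing Lemma \ref{lem:triviality of a mapping class group} for the first identification and Alexander's trick for the second. Your more detailed account of injectivity (via that lemma) and surjectivity (extending over the disks of $\Delta'$ and the resulting $3$-balls) is exactly what the paper's two-sentence proof leaves implicit.
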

\begin{proof}
Set $V = N(\Gamma)$. 
Let $\Delta^\prime$ 
be the collection of 
essential disks in $V$ defined as in the above. 
By Lemma \ref{lem:triviality of a mapping class group}, 
we see at once that  
$\MCG (S^3, \Gamma) 
\cong \MCG (S^3 , V, \cup_{D \in \Delta^\prime} D)$ 
($\MCG_+ (S^3, \Gamma) 
\cong \MCG_+ (S^3 , V, \cup_{D \in \Delta^\prime} D)$, respectively) and 
$\MCG (S^3 , V, \cup_{D \in \Delta^\prime} D)
\cong \MCG (E(\Gamma) , \underline{\underline{m(\Gamma)}})$ 
($\MCG_+ (S^3 , V, \cup_{D \in \Delta^\prime} D)
\cong \MCG_+ (E(\Gamma) , \underline{\underline{m(\Gamma)}})$, respectively) 
using the Alexander's trick. 
Hence the assertion follows. 
\end{proof}

By a {\it handlebody-knot},
we shall mean a handlebody $V$ embedded in
the 3-sphere. 
{We note} that the exterior 
$E(V)$ of $V$ 
is always irreducible.
We recall that a 3-manifold is said to be {\it atoroidal} if
it does not contain an embedded, non-boundary parallel,
incompressible torus.

\begin{lemma}
\label{lem:MCG+(E(V) rel partial E(V)) = 1}
Let $V \subset S^3$ be a handlebody-knot of genus
at least two.
Assume that $E(V)$ is atoroidal.
Then $\MCG_+(E(V) ~ \rel ~ \partial E(V)) = 1$.
\end{lemma}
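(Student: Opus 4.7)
The plan is to apply Johannson's rigidity theory, using a carefully chosen boundary pattern on $E(V)$ induced by a spine of the handlebody $V$.

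First, I would fix a graph $\Gamma \subset \Int \thinspace V$ that is a deformation retract of $V$ and all of whose edges are loops---for instance the wedge of $g$ circles at a single vertex of valency $2g$, where $g \geq 2$ is the genus of $V$. Then $V$ may be identified with $N(\Gamma)$, so $E(V) = E(\Gamma)$, and we equip it with the complete and useful boundary pattern $\underline{\underline{m(\Gamma)}}$ defined in Section 2. Since $\Gamma$ has no cut edges and no non-loop edges, $\Delta^\prime$ consists of $4g$ disks (four parallel copies of each meridian), and $\underline{\underline{m(\Gamma)}}$ consists of one ``big'' planar surface (a $2g$-punctured sphere) together with $3g$ middle annuli lying between successive parallel copies of the meridians.

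The main task is to verify that $(E(V), \underline{\underline{m(\Gamma)}})$ is simple in the sense of Subsection 2.3. Irreducibility of $E(V)$ is already noted. Since no element of the pattern is a torus and $E(V)$ is atoroidal by hypothesis, condition (2) of simplicity is vacuous. Incompressibility of the pattern elements follows from the fact that the $D_i$ form a complete essential disk system of $V$, via a standard innermost-disk argument. The principal obstacle is the anannularity condition (3): every essential annulus $A \subset E(V)$ with $\partial A$ disjoint from the seams $\bigcup_{D \in \Delta^\prime} \partial D$ must be shown to be compressible or parallel into a pattern element. I would put $A$ in general position with respect to $\bigcup \Delta^\prime$ and reduce intersections by the usual innermost-disk and outermost-arc arguments; the remaining configurations are to be ruled out using atoroidality together with $g \geq 2$. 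The multiplicity of four parallel copies of each meridian in $\Delta^\prime$ is precisely tailored to guarantee that any surviving $A$ either lies in the complementary ball obtained by cutting $E(V)$ along $\Delta^\prime$ or parallels into a middle annulus.

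With simplicity established, the conclusion follows from a strengthened form of Theorem \ref{thm:Joh79 Prop 27.1} from Johannson's theory: for a simple 3-manifold with complete and useful boundary pattern, every self-homeomorphism that is the identity on the entire boundary is isotopic to the identity rel the boundary. Since any orientation-preserving $\phi \in \Homeo_+(E(V))$ with $\phi|_{\partial E(V)} = \id$ automatically preserves $\underline{\underline{m(\Gamma)}}$, applying this result yields $\MCG_+(E(V) \rel \partial E(V)) = 1$.

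The main obstacle will be the anannularity condition in the simplicity check, since essential annuli in $E(V)$ can arise from diverse geometric sources and only the combination of atoroidality and $g \geq 2$ suffices to rule them all out once the boundary pattern has been made sufficiently fine.
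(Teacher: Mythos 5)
Your overall strategy---equip $E(V)$ with the boundary pattern coming from a spine and feed it into Johannson's theory---matches the paper's, but the step that carries the entire content of the lemma is unjustified and, as stated, false. The ``strengthened form'' of Theorem \ref{thm:Joh79 Prop 27.1} you invoke (in a simple $3$-manifold with complete and useful boundary pattern, every homeomorphism that is the identity on $\partial M$ is isotopic to the identity rel $\partial M$) is not Proposition 27.1 and has counterexamples: for a hyperbolic knot $K$, the exterior $E(K)$ with the single pattern element $\partial E(K)$ is simple with complete and useful boundary pattern, yet $\MCG_+(E(K) ~\rel~ \partial E(K))$ contains a copy of $\Integer\times\Integer$, namely the twists supported in a collar $\partial E(K)\times[0,1]$ arising from $\pi_1(\Homeo_+(T^2))\cong\Integer^2$. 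Johannson only gives finiteness of $\MCG_+(E(V),\underline{\underline{m}})$, and finiteness of a group that $\MCG_+(E(V)~\rel~\partial E(V))$ merely maps \emph{to} says nothing about the latter unless that map is injective.

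Closing this gap requires exactly the two inputs your proposal omits. First, the fibration $\Homeo_+(E(V))\to\Homeo_+(\partial E(V))$ yields an exact sequence in which $\pi_1(\Homeo_+(\partial E(V)))$ surjects onto the kernel of $\MCG_+(E(V)~\rel~\partial E(V))\to\MCG_+(E(V))$; since $\partial E(V)$ has genus at least two, Hamstrom's theorems give $\pi_1(\Homeo_+(\partial E(V)))=1$, so the map is injective and the rel-boundary group is finite. This is precisely where the hypothesis $g\geq 2$ enters and where the torus counterexample above is excluded; your argument never uses $g\geq 2$ at this stage. Second, finiteness is still not triviality: one must invoke the torsion-freeness of $\MCG_+(E(V)~\rel~\partial E(V))$ due to Heil--Tollefson (see also Hatcher--McCullough). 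A secondary point: the paper deliberately takes the spine to be a $\theta$-curve ($g=2$) or a wheel graph ($g>2$) rather than your bouquet of $g$ circles; if you insist on the bouquet, the simplicity of $(E(V),\underline{\underline{m(\Gamma)}})$---in particular the anannularity condition (3)---needs an actual verification rather than a sketch, and it is not clear it holds for that choice.
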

\begin{proof}
Let $\Gamma$ be a spatial graph in $S^3$ such that
$N(\Gamma) = V$
and that $(E(V) , \underline{\underline{ m }}) $ is simple,
where
we set $\underline{\underline{ m }} =
 \underline{\underline{ m (\Gamma)}}$.
Such a graph can be taken, for instance, as follows:
Let $g$ be the genus of $V$.
If $g = 2$, take $\Gamma$ as a $\theta$-curve, i.e.
a graph on two vertices with three edges
joining them.
If $g > 2$, take $\Gamma$ as a {\it wheel graph} with $g+1$ vertices,
i.e. the 1-skeleton of a $g$-gonal pyramid.
By Theorem \ref{thm:Joh79 Prop 27.1},
$\MCG_+ (E(V) , \underline{\underline{ m }}) $ is finite. 

Consider the following exact sequence
corresponding to the fibration
$\Homeo_+ (E(V)) \to
\Homeo_+ (\partial E(V)) $:
\begin{eqnarray*}
\cdots &\to& \pi_1( \Homeo_+ (\partial E(V))) \to
\MCG_+ ( E(V) \thinspace \rel \thinspace \partial E(V))
\to \MCG_+ (E(V)) \\
&\to& \MCG_+ (\partial E(V)) \to 0
\end{eqnarray*}
Since $\Gamma$ is not a knot, the genus of
$\partial E(V)$ is at least two.
Hence we have
\[\pi_1( \Homeo_+ (\partial E(V))) = 1\]
by \cite{Ham62, Ham65, Ham66}.
It follows that the map
$\MCG_+ (\partial E(V) \thinspace \rel \thinspace \partial E(V))
\to \MCG_+ (E(V))$ in the
above sequence is an injection.
On the other hand, the sequence of inclusions
\[ \Homeo_+ (E(V) ~\rel~ \partial E(V) ) \subset
 \Homeo_+ (E(V) , \underline{\underline{ m }})
\subset \Homeo_+ ( E(V))\]
induces the following sequence of homomorphisms:
\[ \MCG_+ (E(V) ~\rel~ \partial E(V) ) \to
\MCG_+ (E(V) , \underline{\underline{ m }}) \to \MCG_+ ( E(V)) . \]
Since $\MCG_+ (E(V) ~\rel~ \partial E(V) ) \to \MCG_+ ( E(V))$ is
an injection and $\MCG_+ (E(V) , \underline{\underline{ m }})$ is finite,
the group $\MCG_+ (E(V) ~\rel~ \partial E(V) )$ is also finite.
By \cite{HT87} (see also \cite{HM97}), the group
$\MCG_+ (E(V) ~\rel~ \partial E(V) ) $ is torsion free,
hence $\MCG_+ (E(V) ~\rel~ \partial E(V) ) = 1$.
\end{proof}

\begin{remark}
In the above proof, we use results
for the homotopy groups of
the automorphism groups of 2-manifolds and 3-manifolds in
the topological and smooth categories while we are
working in the piecewise linear category.
However, in dimensions at most 3,
it has been known that the information on the homotopy types of automorphism groups
of manifolds is the same to the smooth, topological, and piecewise linear categories,
see e.g. \cite{Ham74}.
\end{remark}

Since the group $MCG(V ~\rel~ \partial V)$ is trivial, for a handlebody $V$,
the above lemma immediately implies the following:
\begin{corollary}
Let $V$ be a handlebody-knot in the $3$-sphere whose genus is at least two.
Then $\MCG_{+}(S^3 ~ \rel ~ V) \cong 1$
if and only if $E(V)$ is atoroidal.
\end{corollary}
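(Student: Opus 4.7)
The plan is to first identify $\MCG_{+}(S^3 ~\rel~ V)$ with $\MCG_{+}(E(V) ~\rel~ \partial E(V))$, which reduces the ``if'' direction immediately to Lemma~\ref{lem:MCG+(E(V) rel partial E(V)) = 1}, and then to settle the ``only if'' direction by constructing a non-trivial Dehn twist along an essential torus in $E(V)$. The reduction is a direct check: any orientation-preserving self-homeomorphism of $S^3$ that fixes $V$ pointwise restricts to such a self-homeomorphism of $E(V)$ fixing $\partial E(V)$ pointwise, and conversely any such self-homeomorphism of $E(V)$ extends by the identity on $V$; these two operations are mutually inverse and are compatible with isotopies rel the corresponding subspaces. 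Hence, if $E(V)$ is atoroidal, the lemma yields $\MCG_{+}(S^3 ~\rel~ V) \cong 1$.

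For the converse I would argue the contrapositive. Suppose $E(V)$ contains an incompressible, non-boundary-parallel torus $T$. Because $\partial E(V)$ has genus at least two and therefore contains no torus component, we may take $T \subset \Int E(V)$ together with a bicollar $T \times [-1,1] \subset \Int E(V)$ that is disjoint from $\partial E(V)$. Choose an essential simple closed curve $c \subset T$ and let $\tau$ be the Dehn twist along $T$ about $c$, supported in the bicollar and extended by the identity on the rest of $E(V)$; by construction $\tau \in \Homeo_{+}(E(V) ~\rel~ \partial E(V))$.

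The main obstacle is verifying that $[\tau] \neq 1$ in $\MCG_{+}(E(V) ~\rel~ \partial E(V))$. Fix a basepoint $x_0 \in \partial E(V)$; because an isotopy rel $\partial E(V)$ induces the identity on $\pi_1(E(V), x_0)$, it suffices to show $\tau_{*} \neq \id$. By incompressibility, the inclusion gives an injection $\pi_1(T) = \mathbb{Z}^2 \hookrightarrow \pi_1(E(V))$, so $\pi_1(E(V))$ acquires the structure of an amalgamated free product $\pi_1(M_1) *_{\mathbb{Z}^2} \pi_1(M_2)$ (if $T$ separates, with $x_0 \in M_2$ say) or of an HNN extension over $\mathbb{Z}^2$ (otherwise). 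A direct computation along a loop that crosses the bicollar shows that $\tau_{*}$ acts as the identity on the factor containing $x_0$ and as conjugation by $[c]$ on the opposite factor. Since $T$ is not boundary-parallel, neither complementary piece is $T \times I$, so $\pi_1(M_1) \supsetneq \mathbb{Z}^2$; choosing $c$ so that $[c]$ is not in the center of $\pi_1(M_1)$ (which is possible because that center has rank at most one), the Bass--Serre normal form then yields $\tau_{*} \neq \id$. Alternatively, one may simply invoke Johannson's classical result that a Dehn twist along an essential torus has infinite order in the mapping class group rel boundary of any Haken $3$-manifold, giving the desired contradiction.
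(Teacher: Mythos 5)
Your derivation of the ``if'' direction is exactly the paper's: identify $\MCG_+(S^3 ~\rel~ V)$ with $\MCG_+(E(V) ~\rel~ \partial E(V))$ by restriction/extension and quote Lemma \ref{lem:MCG+(E(V) rel partial E(V)) = 1}. The paper offers no argument at all for the ``only if'' direction (it is presented as immediate), so your torus-twist construction is a genuine addition, and its overall shape is the right one: support the twist in a bicollar disjoint from $\partial E(V)$, base $\pi_1$ at a point of $\partial E(V)$ so that an isotopy rel boundary forces $\tau_*=\id$, and compute $\tau_*$ on the amalgam or HNN splitting induced by the incompressible torus $T$.

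Two caveats on that second half. First, your fallback claim --- that a Dehn twist along an essential torus has infinite order rel boundary in any Haken $3$-manifold --- is false as stated: in $F\times S^1$ with $F$ a compact surface with boundary, the twist along a vertical torus $\gamma\times S^1$ in the fiber direction is isotopic to the identity rel boundary (rotate the fibers on one side of $\gamma$), precisely because the fiber class is central. So the careful choice of the twisting curve $c$ is not optional, and the $\pi_1$ computation is the argument, not a convenience. Second, that computation rests on your parenthetical assertion that the center of $\pi_1(M_1)$ has rank at most one, i.e.\ that $\pi_1(T)\cong\Integer^2$ is not entirely central in $\pi_1(M_1)$. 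This is true here but not free: one must rule out $M_1$ being a solid torus (excluded by incompressibility of $T$) or $T^2\times I$ (excluded because $T$ is not boundary-parallel and $\partial E(V)$, of genus at least two, contains no torus), and then invoke the structure theory of $3$-manifold groups with non-trivial center (a Haken manifold with non-trivial center is Seifert fibered, and a Seifert fibered space with non-empty boundary other than these exceptions has at most infinite cyclic center). You should cite or prove this step; it is the one real gap in an otherwise correct argument. In the non-separating (HNN) case no such care is needed, since $\tau_*$ sends the stable letter $t$ to $t[c]$ and $[c]\neq 1$ by incompressibility.
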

This corollary implies that $\MCG_+(S^3, V)$ can
be regarded as a subgroup of the handlebody-group
$\MCG_+(V)$ when $E(V)$ is atoroidal.

\begin{definition}
Let $\Gamma$ be a spatial graph in $S^3$.
\begin{enumerate}
\item
Let $P$ be a 2-sphere embedded in $S^3$ satisfying:
\begin{itemize}
\item
the sphere $P$ intersects $\Gamma$ in a single vertex, and
\item
each of the two
components of $S^3 \setminus P$ contains
non-empty part of $\Gamma$.
\end{itemize}
Then $P$ is called a {\it type I sphere} for $\Gamma$. See the left in
Figure \ref{fig:type_I_II_III_spheres}.
\item
Let $P$ be a 2-sphere embedded in $S^3$ satisfying:
\begin{itemize}
\item
the sphere $P$ intersects $\Gamma$ in exactly two vertices,
\item
the closure of neither component of
$(S^3 \setminus P) \cap \Gamma$ is a single point
or a
single edge, and
\item
the annulus $P \cap E(\Gamma)$
is incompressible in
$E(\Gamma)$.
\end{itemize}
Then $P$ is called a {\it type II sphere} for $\Gamma$. See the middle in
Figure \ref{fig:type_I_II_III_spheres}.
\item
A 2-sphere with two points identified to a single point is called a {\it pinched sphere} and the identified point is called its pinch point. 
Let $P$ be a pinched sphere in $S^3$ with
a pinch point $p$ satisfying:
\begin{itemize}
\item
The pinch point $p$ is a vertex of $\Gamma$
such that $P \cap \Gamma = \{ p \}$,
\item
the closure of neither component of
$(S^3 \setminus P) \cap \Gamma$ is a single point nor a single loop, and
\item
the annulus
$P \cap E(\Gamma)$
is incompressible in $E(\Gamma)$.
\end{itemize}
Then $\Gamma$ is called a {\it type III sphere} for $\Gamma$.
See the right in
Figure \ref{fig:type_I_II_III_spheres}.
\end{enumerate}
\begin{figure*}[htbp]
\begin{center}
\includegraphics[width=14cm,clip]{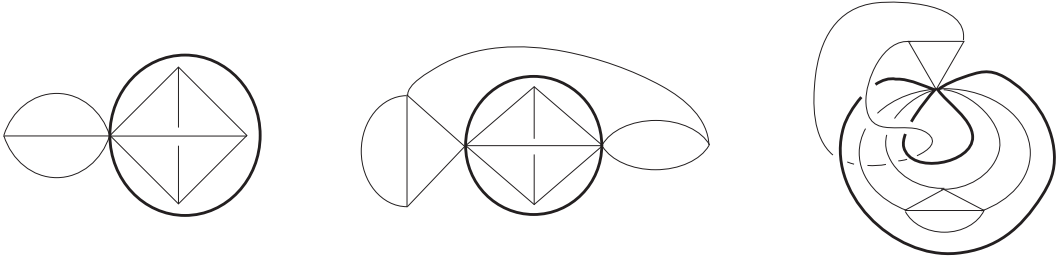}
\caption{Type I, II and III spheres}
\label{fig:type_I_II_III_spheres}
\end{center}
\end{figure*}
\end{definition}

\begin{theorem}
\label{thm:the mcg and the positive tsg}
Let $\Gamma$ be a spatial graph in $S^3$ which is not a knot.
Then $\MCG_+(S^3, \Gamma) \cong \TSG_+(S^3, \Gamma)$
if and only if $E(\Gamma)$ is atoroidal and
$\Gamma$ admits neither
Type I, II nor III spheres.
\end{theorem}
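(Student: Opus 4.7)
By Lemma \ref{pro:symmetry groups and topological symmetry groups}, the desired isomorphism is equivalent to $\MCG_+(S^3 \rel \Gamma) = 1$, so the problem reduces to characterizing triviality of this group. Set $V = N(\Gamma)$; since $\Gamma$ is not a knot and has only vertices of valency at least three, $V$ is a handlebody of genus at least two. Under the isomorphism $\MCG_+(S^3, \Gamma) \cong \MCG_+(E(\Gamma), \underline{\underline{m(\Gamma)}})$ from the preceding lemma, $\MCG_+(S^3 \rel \Gamma)$ corresponds to the subgroup $G$ of mapping classes that fix every element of $\underline{\underline{m(\Gamma)}}$ setwise, because the quotient to $\TSG_+(S^3, \Gamma)$ records the induced permutation of pattern elements.

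For the direction $(\Leftarrow)$, the plan is to show that the hypotheses force $(E(V), \underline{\underline{m(\Gamma)}})$ to be simple, the boundary pattern being complete and useful by construction. Irreducibility of $E(V)$ is automatic, and since no element of the pattern is a torus, condition~(2) of simplicity coincides with the atoroidal hypothesis. Conditions (1) and (3) translate, respectively, into the absence of type~I spheres (a compressing disk for a pattern element caps off across $V$ into a sphere meeting $\Gamma$ in a single vertex) and the absence of type~II and III spheres (an essential annulus not parallel into the pattern caps off to a sphere of type~II or a pinched sphere of type~III, according to whether its two capping disks in $V$ lie at distinct vertices or at a common vertex). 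Theorem \ref{thm:Joh79 Prop 27.1} then yields finiteness of $\MCG_+(E(V), \underline{\underline{m(\Gamma)}})$, and hence of $G$. To upgrade finiteness to triviality I would adapt the proof of Lemma \ref{lem:MCG+(E(V) rel partial E(V)) = 1}, using the fibration exact sequence for $\Homeo_+(E(V)) \to \Homeo_+(\partial E(V))$ together with Hamstrom's vanishing $\pi_1(\Homeo_+(\partial E(V))) = 1$ for the genus $\geq 2$ boundary and the torsion-freeness of $\MCG_+(E(V) \rel \partial E(V))$ to conclude $G = 1$.

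For the direction $(\Rightarrow)$, I argue contrapositively and construct a non-trivial element of $\MCG_+(S^3 \rel \Gamma)$ from each failure of the hypotheses. An essential torus $T \subset E(V)$ provides an infinite-order Dehn twist supported in a tubular neighborhood of $T$ that is the identity on $V$, hence fixes $\Gamma$ pointwise and is non-trivial in $\MCG_+(S^3 \rel \Gamma)$. Each type of sphere produces, by the above dictionary, an essential compressing disk (type~I) or an essential annulus (types~II and III) in $(E(V), \underline{\underline{m(\Gamma)}})$; pairing the corresponding twist in $E(V)$ with a matching meridian twist in $V$ along the disks of $\Delta^\prime$ near the sphere yields a self-homeomorphism of $S^3$ fixing $\Gamma$ pointwise, whose non-triviality follows from the essentiality of the surface in the exterior together with Lemma \ref{lem:triviality of a mapping class group}.

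The principal technical obstacle will be the careful construction of this dictionary between sphere types and essential surfaces in $(E(V), \underline{\underline{m(\Gamma)}})$, and the verification that the combined twists survive as genuinely non-trivial classes in $\MCG_+(S^3 \rel \Gamma)$---in particular, being distinguished from the trivial full rotations that can arise in ball or solid-torus neighborhoods. The type~III case is especially delicate: the two capping disks of the associated annulus share the pinch-point vertex, so the meridian twists in $V$ must be coordinated among the four parallel copies $D_{i, 1}, \dots, D_{i, 4}$ of the loop-edge dual disks built into $\Delta^\prime$ in order to produce a consistent net twist at that vertex.
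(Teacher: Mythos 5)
Your reduction via Lemma \ref{pro:symmetry groups and topological symmetry groups} to showing $\MCG_+(S^3 ~\rel~ \Gamma)=1$, your explicit dictionary between type I/II/III spheres and the compressing disks, essential annuli and essential tori of $(E(\Gamma),\underline{\underline{m(\Gamma)}})$ (a point the paper leaves implicit when it invokes Theorem \ref{thm:Joh79 Prop 27.1}), and your construction of nontrivial twists for the ``only if'' direction all match the intended argument. The gap is in the last step of the ``if'' direction, where you propose to ``upgrade finiteness to triviality'' by adapting the proof of Lemma \ref{lem:MCG+(E(V) rel partial E(V)) = 1}. The tools you list there --- the fibration sequence for $\Homeo_+(E(V))\to\Homeo_+(\partial E(V))$, Hamstrom's vanishing of $\pi_1(\Homeo_+(\partial E(V)))$, and the torsion-freeness of $\MCG_+(E(V)~\rel~\partial E(V))$ --- only control the \emph{kernel} of the restriction $G\to\MCG_+(\partial E(\Gamma))$: they show that an element of $G$ whose boundary restriction is isotopically trivial is itself trivial. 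They say nothing about the \emph{image} of $G$ in $\MCG_+(\partial E(\Gamma))$, which a priori could be a nontrivial finite subgroup. A homeomorphism of $S^3$ fixing $\Gamma$ pointwise is in general far from the identity on $\partial N(\Gamma)$; it can rotate a neighborhood of an edge and thereby contribute meridian Dehn twists to the boundary restriction.

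Killing this image is the actual core of the paper's proof and requires two further ingredients that your plan omits. First, the restriction of such an $f$ to each non-annular pattern piece $B_i$ lies in the pure mapping class group $\MCG_+(B_i,C_{i_1},\dots,C_{i_{n_i}})$ of a planar surface, which is torsion-free by \cite{FN62}; since $G$ is finite, this restriction must already be trivial. Second, once $f$ is the identity on all the $B_i$, its boundary restriction is a product of Dehn twists along disjoint meridian curves lying in the annular pieces, and such a product is either trivial in $\MCG_+(\partial E(\Gamma))$ or of infinite order; the infinite-order alternative again contradicts the finiteness supplied by Theorem \ref{thm:Joh79 Prop 27.1}. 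Only after both reductions is one in the rel-boundary situation where Lemma \ref{lem:MCG+(E(V) rel partial E(V)) = 1} applies. Without these two steps the argument does not close, so you should add them explicitly.
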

\begin{proof}
The ``only if" part is straightforward.

Let $\Gamma$ be a spatial graph in $S^3$ such that
 $E(\Gamma)$ is atoroidal and
$\Gamma$ admits neither
Type I, II nor III spheres.
By Lemma \ref{pro:symmetry groups and topological symmetry groups},
it suffices to show that
$\MCG_+(S^3 ~ \rel ~ \Gamma) = 1$.
Set $\underline{\underline{ m }} = \underline{\underline{ m (\Gamma)}}$.
Recall that $\MCG_+ (E( \Gamma ) , \underline{\underline{ m }}) $ is a finite group
by Theorem \ref{thm:Joh79 Prop 27.1}.

Let $\mathcal{A} = \{ A_1, A_2, \ldots, A_{n^\prime} \}$
($\mathcal{B} = \{ B_1, B_2, \ldots, B_{m} \}$, respectively) be the
set of annulus components (non-annulus components, respectively)
of $\underline{\underline{ m }}$.
We note that each $B_i$ is a planar surface.
Set $\partial B_i = C_{i_1} \cup C_{i_2} \cup \cdots \cup C_{i_{n_i}}$.

Suppose that $\MCG_+ (E( \Gamma ) , \underline{\underline{ m }})$ is not trivial.
Then there exists an  element $f \in \Homeo_+
(E( \Gamma ) , \underline{\underline{ m }})$
that is not isotopic to the identity.
The proof is divided into two cases.

\vspace{1em}

\noindent{\it Case} 1: For each $B_i \in \mathcal{B}$,
$f | _{B_i} $ is trivial as an element of
$\MCG_+ (B_i , C_{i_1} , C_{i_2} , \ldots , C_{i_{n_i}})$.
We may assume that $f | _{\bigcup_{i=1}^m B_i}$ is the identity.
Then there exists a set of pairwise disjoint, pairwise non-parallel
essential simple closed curves
$\gamma_1, \gamma_2 , \ldots , \gamma_l$ in $\bigcup_{i=1}^{n^\prime} A_i$
such that $f | _{\partial E(\Gamma)}$ is
$\prod_{i=1}^l {\tau_{\gamma_i}}^{\alpha_i}$
as an element of $\MCG_+(\partial E(\Gamma))$,
where $\tau_{\gamma_i}$ is a 
Dehn twist along $\gamma_i$, and $\alpha_i$ is a non-zero integer.
By Lemma \ref{lem:MCG+(E(V) rel partial E(V)) = 1},
$\prod_{i=1}^l {\tau_{\gamma_i}}^{\alpha_i} \neq 1 \in \MCG_+(\partial E(\Gamma))$.
It follows that the order of $f | _{\partial E(\Gamma)}$
as an element of $\MCG(\partial E(\Gamma))$ is infinite since
each $\gamma$ is a boundary of a meridian disk of
$N(\Gamma)$.
This contradicts the fact that
$\MCG_+ (E( \Gamma ) , \underline{\underline{ m }}) $ is a finite group.

\vspace{1em}

\noindent{\it Case} 2:
There exists an element $B_i \in \mathcal{B}$ such that
$f | _{B_i} $ is not trivial as an element of
$\MCG_+ (B_i , C_{i_1} , C_{i_2} , \ldots , C_{i_{n_i}})$.
Since the group
$\MCG_+ (B_i , C_{i_1} , C_{i_2} , \ldots , C_{i_{n_i}})$
is torsion free
by \cite{FN62},
this contradicts, again, the fact that
$\MCG_+ (E( \Gamma ) , \underline{\underline{ m }}) $ is a finite group.
\end{proof}

Let $\Gamma \subset S^3$ be a spatial graph.
If $\Gamma$ admits either Type I, II or III
spheres, then
$\MCG_+(S^3, \Gamma)$ is not a finite group
since $\MCG_+(E(\Gamma), \underline{\underline{ m (\Gamma)}})$
admits non-trivial twists along essential
disks or annuli corresponding to the spheres.
Hence we have the following:

\begin{corollary}
\label{cor:finiteness of a mapping class group}
Let $\Gamma$ be a spatial graph in $S^3$.
Then $\MCG_+(S^3, \Gamma)$ is a finite group if and only if
$\MCG_+(S^3, \Gamma) \cong \TSG_+(S^3, \Gamma)$.
\end{corollary}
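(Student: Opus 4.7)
The plan is to use Theorem~\ref{thm:the mcg and the positive tsg} together with the general principle that twists along essential tori, annuli, or essential-boundary disks give infinite-order elements of the mapping class group of $E(\Gamma)$ relative to the boundary pattern $\underline{\underline{m(\Gamma)}}$. The ``if'' direction is immediate: since $\TSG_+(S^3, \Gamma)$ is a subgroup of the finite group $\Aut(\Gamma)$, any isomorphism $\MCG_+(S^3, \Gamma) \cong \TSG_+(S^3, \Gamma)$ forces $\MCG_+(S^3, \Gamma)$ to be finite.

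For the ``only if'' direction I would argue contrapositively. Assuming $\MCG_+(S^3, \Gamma) \not\cong \TSG_+(S^3, \Gamma)$, Theorem~\ref{thm:the mcg and the positive tsg} supplies either (a) an incompressible, non-boundary-parallel torus $T$ in $E(\Gamma)$, or (b) a Type~I, II, or III sphere $P$ for $\Gamma$, and the aim is to produce in each case an infinite-order element of the group $\MCG_+(E(\Gamma), \underline{\underline{m(\Gamma)}})$, which was shown above to be isomorphic to $\MCG_+(S^3, \Gamma)$. In case (a), the Dehn twist $\tau_T$ is supported in a collar of $T \subset \Int \thinspace E(\Gamma)$, hence fixes $\partial E(\Gamma)$ pointwise, and is of infinite order by the standard fact that Dehn twists along essential tori in 3-manifolds have infinite order. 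In case (b), the surface $S := P \cap E(\Gamma)$ is a properly embedded disk (for Type~I) or an incompressible annulus (for Types~II and III), and I would build a ``twist'' $\tau_S \in \Homeo_+(E(\Gamma), \underline{\underline{m(\Gamma)}})$ supported in a bicollar of $S$ whose restriction to $\partial E(\Gamma)$ is a Dehn twist along an essential simple closed curve $c$ (either $\partial S$ for Type~I, or one of the boundary components of $S$ for Types~II and III) contained in a single component of $\underline{\underline{m(\Gamma)}}$.

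The main obstacle will be verifying that the twists $\tau_S$ in case (b) have infinite order, particularly in the Type~I case. The subtlety is that a naive ``disk twist'' is trivial in $\MCG_+(E(\Gamma))$ when the boundary is free to move, so preservation of the boundary pattern is crucial. The key point is that in each subcase of (b) the curve $c$ lies in a vertex component $B_v$ of $\underline{\underline{m(\Gamma)}}$, which is a planar surface of negative Euler characteristic (because every vertex of $\Gamma$ has valency at least three), and $c$ is essential in $B_v$. The restriction $\tau_S|_{B_v}$ is then a Dehn twist along $c$, and since $\MCG_+(B_v, \partial B_v)$ is torsion free by \cite{FN62}---as already invoked in Case~2 of the proof of Theorem~\ref{thm:the mcg and the positive tsg}---all iterates $\tau_S^n|_{B_v}$ are mutually distinct. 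Hence the iterates $\tau_S^n$ themselves are mutually distinct in $\MCG_+(E(\Gamma), \underline{\underline{m(\Gamma)}})$, showing that this group, and therefore $\MCG_+(S^3, \Gamma)$, is infinite.
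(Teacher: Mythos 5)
Your top-level strategy coincides with the paper's: the ``if'' direction from finiteness of $\TSG_+(S^3,\Gamma)$, and the ``only if'' direction by combining Theorem~\ref{thm:the mcg and the positive tsg} with the claim that the tori and spheres produce infinite-order twists in $\MCG_+(E(\Gamma),\underline{\underline{m(\Gamma)}})$ (the paper disposes of the sphere case in a single sentence and is silent on the torus case). However, the detailed justification you supply for the infinite order of the disk and annulus twists has a genuine gap. You restrict $\tau_S$ to a vertex component $B_v$ of the boundary pattern and invoke torsion-freeness. But the restriction of a class in $\MCG_+(E(\Gamma),\underline{\underline{m}})$ to $B_v$ is only well defined in $\MCG_+(B_v, C_1,\ldots,C_n)$, where each boundary circle is preserved \emph{setwise}, not pointwise; in that group a Dehn twist along a boundary-parallel curve is trivial (one unwinds it by rotating the adjacent boundary circle). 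And the curve $c$ is very often boundary-parallel in $B_v$: if $v$ has valency three, $B_v$ is a pair of pants, in which every essential simple closed curve is boundary-parallel; more generally, any Type I sphere that splits a single edge germ off at $v$ yields a $c$ parallel to a boundary circle of $B_v$. In all such cases $\tau_S^n|_{B_v}$ is trivial for every $n$ and your argument proves nothing. The same defect occurs for the boundary curves coming from Type II and III spheres.

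The repair is to restrict to the whole closed surface instead: the restriction map $\MCG_+(E(\Gamma),\underline{\underline{m}})\to\MCG_+(\partial E(\Gamma))$ is a homomorphism, the curve $c=\partial(P\cap E(\Gamma))$ is essential in $\partial E(\Gamma)$ (for Type I because neither side of $P$ can cut a ball off $N(\Gamma)$, using that every vertex has valency at least three; for Types II and III because the annulus $P\cap E(\Gamma)$ is incompressible), and a Dehn twist along an essential simple closed curve in a closed surface of positive genus has infinite order. For Types II and III the boundary restriction is a product $\tau_{c_1}\tau_{c_2}^{\pm1}$ of twists along the two boundary circles, so one must still rule out the case where $c_1$ and $c_2$ are isotopic in $\partial E(\Gamma)$; this needs a word. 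Finally, your treatment of case (a) rests on ``the standard fact that Dehn twists along essential tori in $3$-manifolds have infinite order,'' which is not a fact: Dehn twists along essential vertical tori in Seifert fibered pieces can be isotopically trivial, so the torus case also requires an argument specific to exteriors of spatial graphs in $S^3$ rather than a citation to folklore.
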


It follows from Theorem \ref{thm:the mcg and the positive tsg} 
and \cite{FNPT05} that 
the group $\MCG_+ (S^3, \Gamma)$ is a finite subgroup of $SO(4)$  
when $E(\Gamma)$ is atoroidal and
$\Gamma$ admits neither
Type I, II nor III spheres.

Let $k$ be a natural number.
Recall that a graph $\Gamma$ is said to be
{\it $k$-connected} if
there does not exist a set
$\{v_1, v_2, \ldots, v_{k-1} \}$
of $k-1$ vertices of $\Gamma$ such that
$\Gamma \setminus \{v_1, v_2, \ldots, v_{k-1} \}$
is not connected as a topological space.

The following corollary immediately
follows from Theorem \ref{thm:the mcg and the positive tsg}:
\begin{corollary}
\label{cor:3-connected graph}
Let $\Gamma$ be a $3$-connected graph. 
Then an embedding of $\Gamma$ into $S^3$ satisfies 
$\MCG_+(S^3, \Gamma) \cong \TSG_+(S^3, \Gamma)$
if and only if $\Gamma$ is atoroidal.
\end{corollary}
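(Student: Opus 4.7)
The plan is to reduce the statement to Theorem \ref{thm:the mcg and the positive tsg}, which asserts that $\MCG_+(S^3, \Gamma) \cong \TSG_+(S^3, \Gamma)$ if and only if $E(\Gamma)$ is atoroidal and $\Gamma$ admits no Type I, II, or III sphere. The corollary already singles out atoroidality, so I only need to show that $3$-connectedness of the underlying abstract graph $\Gamma$ automatically rules out every Type I, II, or III sphere. The uniform strategy is to argue by contradiction: from a sphere of any such type, I will extract a set of at most two vertices whose removal disconnects $\Gamma$.

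For a Type I sphere $P$, the defining properties give $P \cap \Gamma = \{v\}$ together with a non-empty portion of $\Gamma$ in each of the two components of $S^3 \setminus P$. Hence $\Gamma \setminus \{v\} \subset S^3 \setminus P$ splits as two non-empty pieces sitting in distinct components, so $\Gamma \setminus \{v\}$ is disconnected and $v$ is a cut vertex. A parallel argument handles Type II with $P \cap \Gamma = \{v_1, v_2\}$: the non-triviality clause forbidding ``a single point or a single edge'' ensures both sides of $\Gamma$ have non-empty interior away from $\{v_1, v_2\}$, so $\Gamma \setminus \{v_1, v_2\}$ is disconnected, a direct contradiction with $3$-connectedness.

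The Type III case is the main obstacle I anticipate, since a pinched sphere $P$ fails to be a surface and does not locally separate $S^3$ at the pinch point $p$, so global separation cannot be read off from Jordan--Brouwer. To establish it I would invoke Alexander duality: since $P$ is homotopy equivalent to $S^2 \vee S^1$ (attach an external arc between the identified pair of points, then collapse it), one has $\tilde{H}^2(P) \cong \mathbb{Z}$, hence $\tilde{H}_0(S^3 \setminus P) \cong \mathbb{Z}$, so $S^3 \setminus P$ has exactly two components. (A hands-on alternative is to resolve the pinch by gluing in a small annulus, producing a genuinely embedded torus whose complement is read off directly.) Given the separation, the inclusion $\Gamma \setminus \{p\} \subset S^3 \setminus P$ together with the non-triviality clause of the Type III definition forces $\Gamma \setminus \{p\}$ to split into at least two non-empty pieces, making $p$ a cut vertex. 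In each case we have contradicted $3$-connectedness (indeed, $2$-connectedness in the Type I and Type III cases), so Theorem \ref{thm:the mcg and the positive tsg} delivers the desired equivalence.
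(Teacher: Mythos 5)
Your proposal is correct and follows exactly the route the paper intends: the corollary is stated as an immediate consequence of Theorem \ref{thm:the mcg and the positive tsg}, the only content being that $3$-connectedness forbids Type I, II and III spheres, which you verify by exhibiting a cut vertex (Types I, III) or a $2$-vertex cut (Type II). Your Alexander-duality check that the complement of a pinched sphere has two components is a detail the paper leaves implicit, and it is carried out correctly.
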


We note that even when $\Gamma$ is not 3-connected, 
most embeddings of $\Gamma$ into $S^3$
satisfy $\MCG_+(S^3, \Gamma) \cong \TSG_+(S^3, \Gamma)$,
see Section \ref{sec:Easy examples}.

\section{Topological symmetry groups and mapping class groups}
\label{sec:Topological symmetry groups and
positive mapping class groups}

In the previous section, we discussed a
topological condition for a spatial graph $\Gamma \subset S^3$
so that the positive mapping class group $\MCG_+(S^3, \Gamma)$
is isomorphic to the positive topological symmetry group
$\TSG_+(S^3, \Gamma)$.
Of course, even when $\MCG_+(S^3, \Gamma)$
is isomorphic to $\TSG_+(S^3, \Gamma)$,
$\MCG(S^3, \Gamma)$ might differ from
$\TSG(S^3, \Gamma)$.
A trivial example is the case of a spatial
3-connected graph $\Gamma \subset S^3$ contained in
an embedded 2-sphere $S^2$ in $S^3$.
In this case, there exists a reflection $f$ through
the 2-sphere.
Then $f$ fixes the 2-sphere pointwise and
in particular it restricts to an identity on $\Gamma$.
Obviously, $f$ is orientation-reversing and
hence $\MCG(S^3 ~\rel~ \Gamma) \cong \Integer / 2 \Integer$
while $\MCG_+ (S^3 ~\rel~ \Gamma) \cong 1$.
By Lemma \ref{pro:symmetry groups and topological symmetry groups},
this implies that
$\MCG(S^3, \Gamma) \ncong \TSG(S^3, \Gamma)$
while $\MCG_+(S^3, \Gamma) \cong \TSG_+(S^3, \Gamma)$.
In this section, we prove that
this is the only case.

%

\begin{proposition}
\label{prop:involution and realization problem}
Let $\Gamma$ be a spatial graph in $S^3$ such that
$\Gamma$ is not a knot.
Let $h$ be an orientation reversing
homeomorphism
of $S^3$ that fixes $\Gamma$ pointwise such that
$h^2 \in \Homeo_+ (S^3 ~ \rel ~ \Gamma)$
is isotopic $($rel $\Gamma)$ to the identity.
Then there exists a homeomorphism
$f  \in \Homeo_+ (S^3 ~ \rel ~ \Gamma)$
such that
$h$ is isotopic $($rel $\Gamma)$ to
$f$ and
$f^2 = \id$.
\end{proposition}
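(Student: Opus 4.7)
The plan is to modify $h$ by an isotopy rel $\Gamma$ in two stages: first, replace $h$ by a map that restricts to a genuine involution on a small regular neighborhood $V = N(\Gamma)$; second, exploit the hypothesis $h^{2} \simeq \id$ rel $\Gamma$ to upgrade $h$ on the exterior $E(\Gamma) = S^{3} \setminus \Int V$ to an involution agreeing with the first one on the shared boundary $\partial V$. Piecing the two involutions together then yields the required $f$.

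For Stage 1, after a preliminary isotopy rel $\Gamma$ I may assume that $h(V) = V$ for a fixed small regular neighborhood $V$ of $\Gamma$. Locally near $\Gamma$ the behavior of $h$ is heavily constrained: near the interior of an edge, $V$ looks like $D^{2} \times I$ with the edge as the core $\{0\} \times I$, and since $h$ fixes this core pointwise but reverses orientation on $S^{3}$, after a local isotopy rel core it becomes the standard fiberwise reflection $(z,t) \mapsto (\bar z, t)$, which is an involution. Near a vertex $v$, the action of $h$ orientation-reversingly and involutively permutes the incident half-edges (the involutivity coming from $h^{2} \simeq \id$ rel $\Gamma$), and this local action can be turned into an honest involution by an equivariant coning argument. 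Assembling the local pieces yields a genuine orientation-reversing involution $\iota$ of $V$ fixing $\Gamma$ pointwise, and Lemma \ref{lem:triviality of a mapping class group} combined with the ambient isotopy extension theorem then allows me to replace $h$ by a map with $h|_{V} = \iota$.

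For Stage 2, set $g = h|_{E(\Gamma)}$. By construction $g|_{\partial V} = \iota|_{\partial V}$ is an involution, and the given isotopy from $h^{2}$ to $\id$ rel $\Gamma$, combined with the fact that $h^{2} = \id$ on $V$ and the triviality of $\MCG(V, \Gamma ~\rel~ \partial V)$, descends to an isotopy from $g^{2}$ to $\id$ rel $\partial V$. Thus $[g]$ has order dividing $2$ in $\MCG(E(\Gamma) ~\rel~ \partial V)$. The task is now to realize $[g]$ by a genuine involution of $E(\Gamma)$ that agrees on $\partial V$ with $\iota|_{\partial V}$; gluing such an involution to $\iota$ on $V$ then produces an orientation-reversing involution $f$ with $f^{2} = \id$ isotopic to $h$ rel $\Gamma$.

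The main obstacle is this final realization step, a Nielsen-realization-type problem for the pair $(E(\Gamma), \underline{\underline{m(\Gamma)}})$: one must upgrade an order-two mapping class of a 3-manifold with boundary pattern to an honest order-two homeomorphism with prescribed boundary behavior. I plan to exploit the Haken structure of $E(\Gamma)$ together with its boundary pattern --- in particular the finiteness of $\MCG_{+}(E(\Gamma), \underline{\underline{m(\Gamma)}})$ supplied by Theorem \ref{thm:Joh79 Prop 27.1} (applied after an equivariant decomposition along the characteristic submanifold into simple pieces) --- and invoke equivariant 3-manifold techniques in the spirit of Tollefson, Meeks--Scott, and Zimmermann to produce the required involution. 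Stage 1 is, by contrast, a routine local-to-global argument resting on the triviality of $\MCG(V, \Gamma ~\rel~ \partial V)$.
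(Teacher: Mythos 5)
Your architecture inverts the paper's, and the inversion is exactly what creates the gap. The paper first applies the Heil--Tollefson theorems \cite{HT78, HT83} to deform $h$ (rel $\Gamma$, hence with no constraint on the exterior) to a map that is an honest involution on $E(\Gamma)$ --- this is legitimate because $E(\Gamma)$ is Haken and not Seifert fibered, which is where the hypothesis that $\Gamma$ is not a knot enters --- and only then extends the involution inward over $N(\Gamma)$ by completely explicit means: invariant meridian disks, Alexander's trick on each disk, and coning over the ball components. In that order, the deep realization theorem is invoked in its standard, absolute form, and the elementary extension step carries no realization difficulty. You instead build the involution on $V=N(\Gamma)$ first and are then forced to solve a \emph{relative} realization problem on $E(\Gamma)$: produce an involution in the class of $g=h|_{E(\Gamma)}$ whose restriction to $\partial V$ equals the prescribed involution $\iota|_{\partial V}$. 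This is strictly harder than the statements of Tollefson, Heil--Tollefson, Meeks--Scott, or Zimmermann, which realize a periodic mapping class by a periodic homeomorphism but give no control tying the boundary restriction to a boundary involution you fixed in advance; reconciling the two on $\partial V$ afterwards would require a further (unproved) conjugation/isotopy argument on the boundary surface that is compatible with both sides. Since your entire Stage 2 is "invoke equivariant techniques in the spirit of\dots", the central step of the proposition is not actually proved, and the precise hypotheses (Haken, non--Seifert-fibered, the role of $h^2\simeq\id$ as giving a homotopy involution) are never checked.

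Two smaller points. First, in Stage 1 you appeal to Lemma \ref{lem:triviality of a mapping class group}, but that lemma concerns $\MCG(V,\Gamma~\rel~\partial V)$ (maps fixing $\partial V$ pointwise), whereas what you need is that $h|_V$ is isotopic to your model involution $\iota$ rel $\Gamma$ with no boundary condition; that is believable but is not what the lemma says, and the local-to-global assembly of the reflections near edges and the coned actions near vertices into a single involution needs the overlaps to be made compatible. Second, your phrase "$[g]$ has order dividing $2$ in $\MCG(E(\Gamma)~\rel~\partial V)$" does not parse as stated, since $g$ does not fix $\partial V$ pointwise; what you actually have is that $g^2$ is isotopic to the identity, i.e.\ $g$ is a homotopy (isotopy) involution of $E(\Gamma)$ --- which is precisely the input the paper feeds to \cite{HT78, HT83} before, rather than after, touching $N(\Gamma)$. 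I recommend reordering your argument to match: realize the involution on the exterior first, then extend over the regular neighborhood by hand.
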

\begin{proof}
By an isotopy (rel $\Gamma$) we may assume that
$h ( N(\Gamma)) = N(\Gamma)$.
Since $E(\Gamma)$ is not a Seifert fibered 3-manifold,
it follows from
\cite{HT78, HT83} that $h$ can be isotoped (rel $\Gamma$)
to a map $h_1: (S^3, \Gamma) \to (S^3, \Gamma)$ with
$h_1 ( E(\Gamma) ) = E(\Gamma)$ and
$(h_1 |_{E(\Gamma)})^2 = \id_{E(\Gamma)}$.
Let $e_1, e_2 , \ldots, e_n$ be the edges of $\Gamma$.
Using a standard argument of
Riemannian geometry,
it is easy to see that for each edge $e_i$ of $\Gamma$
there exists a meridian disk $D_i$ of $N(\Gamma)$ such that
$h_1 ( \partial D_i ) = \partial D_i$ and
$D_i$ intersects $\Gamma$ once and transversely at a point $p_i$
in $\Int \thinspace e_i$.
Note that we have $h_1|_{\partial D_i} = id_{D_i}$ for each $i$.
Since $D_i$ and $h_1(D_i)$ are parallel in $N(\Gamma)$,
$h_1$ can be isotoped  (rel $E(\Gamma) \cup \Gamma$) to
a map $h_2: (S^3, \Gamma) \to (S^3, \Gamma)$ with
$h_2 (D_i) = D_i $ for each $i$.
Then by the Alexander's trick, we may isotope $h_2$ (rel $E(\Gamma) \cup \Gamma$)
preserving each meridian disk $D_i$ as a set
to a map $h_3: (S^3, \Gamma) \to (S^3, \Gamma)$ with
$(h_3 |_{D_i}) ^2 = \id_{D_i}$ for each $i$.
Finally, let $B$ be the closure of
a component of $N(\Gamma) \setminus \bigcup_i D_i$.
since $B \cap \Gamma$ is the cone over points in $\partial B$ with vertex
at the center of $B$, we may isotope $h_3$ (rel $E(B)$)
to a map $h_4$ with $(h_4 |_B) ^2 = \id_B$.
Performing this isotopy for the closure of
each component of $N(\Gamma) \setminus \bigcup_i D_i$,
$h_4$ can be isotoped (rel $E(\Gamma) \cup \Gamma \cup \bigcup_i D_i$)
to a required orientation-reversing involution
$f: (S^3, \Gamma) \to (S^3, \Gamma)$.
\end{proof}

A spatial graph $\Gamma \subset S^3$
is called a {\it plane graph}
if $\Gamma$ is embedded in a sphere $S^2$ in $S^3$.

\begin{theorem}
\label{thm:involution and the symmetry group of a spatial graph}
Let $\Gamma$ be a spatial graph in $S^3$ such that
$\Gamma$ is not a knot and that
$\MCG_+(S^3, \Gamma) \cong \TSG_+ (S^3, \Gamma)$.
Then $\MCG(S^3, \Gamma) \cong \TSG (S^3, \Gamma)$
if and only if $\Gamma$ is not a plane graph.
\end{theorem}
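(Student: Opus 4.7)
The plan is to reduce, via Lemma \ref{pro:symmetry groups and topological symmetry groups}, the theorem to a question about when $\MCG(S^3 \rel \Gamma)$ is trivial, and then to combine Proposition \ref{prop:involution and realization problem} with PL Smith theory for involutions of $S^3$ to pin down the obstruction.

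First, by Lemma \ref{pro:symmetry groups and topological symmetry groups}, the hypothesis $\MCG_+(S^3,\Gamma) \cong \TSG_+(S^3,\Gamma)$ is equivalent to $\MCG_+(S^3 \rel \Gamma) = 1$, and the desired conclusion $\MCG(S^3,\Gamma) \cong \TSG(S^3,\Gamma)$ is equivalent to $\MCG(S^3 \rel \Gamma) = 1$. The orientation character on $\Homeo(S^3 \rel \Gamma)$ induces a natural homomorphism $\MCG(S^3 \rel \Gamma) \to \Integer/2\Integer$ whose kernel is precisely $\MCG_+(S^3 \rel \Gamma)$. Under the given hypothesis this kernel is trivial, so $\MCG(S^3 \rel \Gamma)$ is either $1$ or $\Integer/2\Integer$, with the latter occurring exactly when there exists an orientation-reversing $h \in \Homeo(S^3 \rel \Gamma)$ that is not isotopic rel $\Gamma$ to the identity.

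The ``if'' part of the biconditional, recast as ``$\Gamma$ plane $\Rightarrow \MCG \ncong \TSG$,'' is handled just as in the motivating example preceding the theorem: if $\Gamma$ lies on a 2-sphere $S \subset S^3$, reflection through $S$ is an orientation-reversing element of $\Homeo(S^3 \rel \Gamma)$ whose nontriviality in $\MCG(S^3 \rel \Gamma)$ follows from the isotopy invariance of orientation. For the ``only if'' direction I argue by contrapositive: assume $\MCG(S^3,\Gamma) \ncong \TSG(S^3,\Gamma)$ and pick an orientation-reversing $h \in \Homeo(S^3 \rel \Gamma)$ not isotopic rel $\Gamma$ to the identity. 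Its square $h^2$ is orientation-preserving and fixes $\Gamma$ pointwise, hence represents the trivial class in $\MCG_+(S^3 \rel \Gamma) = 1$. Proposition \ref{prop:involution and realization problem} then applies and replaces $h$, up to isotopy rel $\Gamma$, by an orientation-reversing PL involution $f : (S^3,\Gamma) \to (S^3,\Gamma)$ with $f^2 = \id$ and $f|_\Gamma = \id_\Gamma$.

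It remains to conclude that $\Gamma$ is a plane graph, which is done by analysing $\Fix(f) \supset \Gamma$. By PL Smith theory together with the classical PL classification of involutions of $S^3$ (in the spirit of Livesay and Waldhausen), the fixed-point set of an orientation-reversing PL involution of $S^3$ is either empty, a pair of isolated points, or a tamely embedded PL 2-sphere. Since $\Gamma \subset \Fix(f)$ is at least 1-dimensional, only the last case can occur, and $\Gamma$ therefore lies on a PL 2-sphere in $S^3$, i.e., $\Gamma$ is a plane graph. The main obstacle is precisely this last step: homological Smith theory alone only yields $\Integer/2\Integer$-cohomological constraints on $\Fix(f)$, and one must invoke the geometric PL classification to know that $\Fix(f)$ is an honest embedded PL 2-sphere carrying $\Gamma$.
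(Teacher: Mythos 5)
Your proposal is correct and follows essentially the same route as the paper: reduce via Lemma \ref{pro:symmetry groups and topological symmetry groups} to the triviality of $\MCG(S^3~\rel~\Gamma)$, use the orientation character to see this group is $1$ or $\Integer/2\Integer$, apply Proposition \ref{prop:involution and realization problem} to replace the orientation-reversing class by an honest involution fixing $\Gamma$ pointwise, and invoke Smith theory (the paper cites \cite{Smi42, HS59}) to conclude $\Fix(f)$ is a $2$-sphere containing $\Gamma$. Your added remark that one needs the geometric classification of PL involutions of $S^3$, not just $\Integer/2\Integer$-cohomological Smith theory, is a sensible refinement of the same argument rather than a different approach (and note you have swapped the labels ``if'' and ``only if'' relative to the statement, though both directions are proved correctly).
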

\begin{proof}
The ``only if" part is clear.

Assume that $\MCG(S^3, \Gamma) \ncong \TSG (S^3, \Gamma)$.
It is equivalent to say that
$\MCG(S^3 ~ \rel ~ \Gamma) \neq 1$,
and hence $\MCG(S^3 ~\rel~\Gamma) \cong \Integer / 2 \Integer$.
Then there exists an orientation-reversing
homeomorphism $h \in  \Homeo (S^3 ~ \rel ~ \Gamma)$
such that 
 $h^2 \in \Homeo_+ (S^3 ~ \rel ~ \Gamma)$
is isotopic (rel $\Gamma$) to the identity.

By Proposition \ref{prop:involution and realization problem},
$h$ can be isotoped (rel $\Gamma$)
to an orientation-reversing involution
$f: (S^3, \Gamma) \to (S^3, \Gamma)$
with
$\Gamma \subset \Fix(f)$, here
$\Fix(f)$ for a map $f: S^3 \to S^3$ is the
set of fixed points of $f$.
Since the fixed point set of
an orientation reversing involution of $S^3$
is either a 2-sphere or two points by 
Smith theory \cite{Smi42} (see also \cite{{HS59}}),
$\Fix (f)$ is the 2-sphere.
This implies that $\Gamma$ is a plane graph.
%

%
%
\end{proof}

\begin{remark}
Theorem
\ref{thm:involution and the symmetry group of a spatial graph}
implies that
$\MCG(S^3 ~ \rel ~ \Gamma) \neq \MCG_+ (S^3 ~ \rel ~ \Gamma)$
if and only if $\Gamma$ is a plane graph under the condition
$\MCG_+(S^3 ~\rel~ \Gamma) \cong 1$.
Finding a general condition which detects
when $\MCG(S^3, \Gamma)$ differs from $\MCG_+ (S^3, \Gamma)$ is
another interesting problem. See e.g. \cite{NT09}. 

Let $\Gamma$ be the spatial graph depicted in Figure
\ref{fig:figure_eight_theta}.
\begin{figure*}[htbp]
\begin{center}
\includegraphics[width=2.5cm,clip]{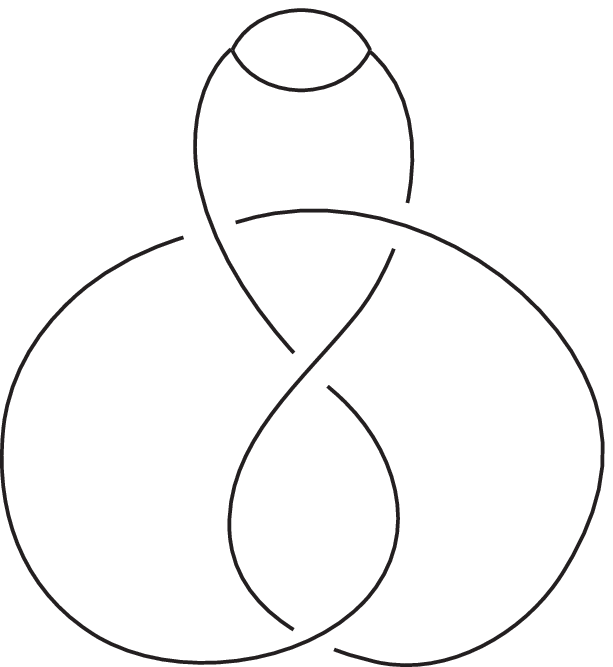}
\caption{}
\label{fig:figure_eight_theta}
\end{center}
\end{figure*}
It is easily seen that
$\Gamma$ is not a plane graph and
$\MCG(S^3, \Gamma)$ differs from $\MCG_+ (S^3, \Gamma)$.
\end{remark}

\section{Easy examples}
\label{sec:Easy examples}
One of the simple but important examples of 
spatial graphs in $S^3$ is a tunnel number one 
knot or link together with a specific tunnel attached.
A tunnel number one knot or link in $S^3$ is a knot 
or a link $K$ with an arc $\tau$ (called a tunnel for $K$) 
such that $K \cap \tau = K \cap \partial \tau$ 
and $E(K \cup \tau)$ is a genus two handlebody.
Allowing that $\partial \tau$ could be a single point 
(that is, a tunnel could be a circle rather than an arc), 
the spatial graph $\Gamma = K \cup \tau$ is either 
a $\theta$-curve or a bouquet of two circles if $K$ is a knot, 
or a handcuff if $K$ is a link 
(a tunnel number one link necessarily consists of at most two components).

In this section, we give a complete list 
of the groups $\MCG_+(S^3, \Gamma)$, $\MCG(S^3, \Gamma)$, 
$\TSG_+(S^3, \Gamma)$ and $\TSG(S^3, \Gamma)$ 
for every spatial graph $\Gamma$ which is the union of 
a  tunnel number one knot or link $K$ and its tunnel $\tau$.
The following lemma is just a direct translation of 
Proposition 17.2 in \cite{CM} into our cases:

\begin{lemma}
\label{lem:stablizer subgroups}
Let $\Gamma$ be the union of a nontrivial tunnel number one 
knot or link $K$ and its tunnel $\tau$ in $S^3$, 
which is either a $\theta$-curve or a handcuff.
\begin{enumerate}
\item If $K$ is a Hopf link and $\tau$ is 
its upper or lower tunnel when $K$ 
is considered as a $2$-bridge link, then 
$\MCG(S^3, \Gamma)$ is isomorphic to 
the dihedral group $D_4$ of order $8$.
\item If $K$ is a nontrivial $2$-bridge knot or link 
which is not a Hope link and $\tau$ is its upper or lower tunnel, 
then $\MCG(S^3, \Gamma)$ is isomorphic to 
$\Integer / 2 \Integer \times \Integer / 2 \Integer$.
\item Otherwise, $\MCG(S^3, \Gamma)$ 
is isomorphic to $\Integer / 2 \Integer$.
\end{enumerate}
\end{lemma}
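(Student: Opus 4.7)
The plan is to recast $\MCG(S^3, \Gamma)$ as a disk-stabilizer inside a handlebody-knot mapping class group, so that Proposition 17.2 of \cite{CM} applies directly. First I would set $V = N(\Gamma)$; since $E(K \cup \tau)$ is a genus-two handlebody by the tunnel number one hypothesis, so is $V$, and the pair $(V, E(V))$ realizes a genus-two Heegaard splitting of $S^3$. The tunnel $\tau$ corresponds to a specific meridian disk $D$ of $V$, namely the one dual to the edge of $\Gamma$ arising from $\tau$.

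The second step is to apply Lemma \ref{lem:triviality of a mapping class group} together with the Alexander trick, exactly as in the identification $\MCG(S^3, \Gamma) \cong \MCG(E(\Gamma), \underline{\underline{m(\Gamma)}})$ carried out earlier in the paper, to obtain
\[
\MCG(S^3, \Gamma) \;\cong\; \MCG(S^3, V, D).
\]
In other words, $\MCG(S^3, \Gamma)$ is realized as the stabilizer of the isotopy class of $D$ inside the handlebody-knot mapping class group $\MCG(S^3, V)$, and this is precisely the object computed by Cho and McCullough.

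Next I would invoke Proposition 17.2 of \cite{CM}, which classifies exactly these disk stabilizers for tunnel number one knots and links. The three cases match: for the Hopf link with upper or lower tunnel, the obvious involution swapping the two components together with the hyperelliptic involution of the Heegaard surface combine to give a dihedral group $D_4$ of order $8$; for a nontrivial non-Hopf $2$-bridge knot or link with upper or lower tunnel, the hyperelliptic involution together with the strong involution of the $2$-bridge knot/link generate a Klein four group $(\Integer/2\Integer)^2$; and for every other tunnel, only the hyperelliptic involution acts nontrivially, yielding $\Integer/2\Integer$.

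The main obstacle, and essentially the only content beyond quoting \cite{CM}, is to verify that Cho and McCullough's notion of equivalence of tunnels (homeomorphisms of $(S^3, V)$ taking the tunnel disk to itself) matches the equivalence of spatial graphs implicit in $\MCG(S^3, \Gamma)$. This is a routine compatibility check: every element of $\Homeo(S^3, V)$ preserving $D$ setwise restricts, after a small isotopy, to a self-homeomorphism of $(S^3, \Gamma)$ by the uniqueness up to isotopy of the dual graph to a disk system, and conversely every element of $\Homeo(S^3, \Gamma)$ extends to one of $\Homeo(S^3, V)$ preserving $D$. A minor book-keeping point is the restriction to the $\theta$-curve and handcuff cases, which amounts to excluding loop-type tunnels; this case is not addressed by the upper/lower tunnel terminology of \cite{CM} and so causes no conflict.
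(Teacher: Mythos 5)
Your proposal matches the paper's treatment: the paper offers no argument beyond the statement that the lemma ``is just a direct translation of Proposition 17.2 in \cite{CM} into our cases,'' and your identification of $\MCG(S^3,\Gamma)$ with the stabilizer $\MCG(S^3,V,D)$ of the tunnel disk, followed by the citation of that proposition, is exactly this translation with the routine compatibility details filled in. No gap.
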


From Lemma \ref{lem:stablizer subgroups} and
Theorems \ref{thm:the mcg and the positive tsg}
and \ref{thm:involution and the symmetry group of a spatial graph}
with some computations, the following proposition is immediate.
We note that the plane handcuff and the plane
bouquet admit the spheres of type I,
while the others considered in the following proposition
do not.

\begin{proposition}
\label{prop:examples}
Let $\Gamma$ be the union of a tunnel number one knot or link 
and its tunnel in $S^3$.

\begin{enumerate}

\item Let $\Gamma$ be a $\theta$-curve.

\begin{enumerate}

\smallskip
\item If $\Gamma$ is a plane graph,

\smallskip
$\MCG_+(S^3, \Gamma) \cong
\TSG_+(S^3, \Gamma) =
\TSG(S^3, \Gamma) =
\Aut (\Gamma) \cong \Integer / 2 \Integer \times
D_3,$

\smallskip
$\MCG(S^3, \Gamma) \cong \Integer / 2 \Integer \ltimes \MCG_+(S^3, \Gamma)$,

\smallskip
where $D_3$ is the dihedral group of order $6$,

\smallskip
\item If $\Gamma$ is the union of a nontrivial $2$-bridge knot and its upper or lower tunnel,

\smallskip
$\MCG_+(S^3, \Gamma) =
\MCG(S^3, \Gamma) \cong
\TSG_+(S^3, \Gamma) =
\TSG(S^3, \Gamma) \\
\cong \Integer / 2 \Integer \times \Integer / 2 \Integer.
$

\smallskip
\item Otherwise,

\smallskip
$\MCG_+(S^3, \Gamma) =
\MCG(S^3, \Gamma) \cong
\TSG_+(S^3, \Gamma) =
\TSG(S^3, \Gamma) \cong \Integer / 2 \Integer.
$
\end{enumerate}

\smallskip
\item Let $\Gamma$ be a handcuff.

\smallskip
\begin{enumerate}
\item If $\Gamma$ is a plane graph,

\smallskip
$\MCG_+(S^3, \Gamma) \cong
\Integer / 2 \Integer \ltimes (\Integer / 2 \Integer \times \Integer)$,

\smallskip
$\MCG(S^3, \Gamma) \cong \Integer / 2 \Integer \ltimes \MCG_+(S^3, \Gamma)$,

\smallskip
$\TSG_+(S^3, \Gamma) =
\TSG(S^3, \Gamma) =
\Aut (\Gamma) \cong
\Integer / 2 \Integer \ltimes
( \Integer / 2 \Integer \times \Integer / 2 \Integer )$.

\smallskip
We note that in this case $\MCG_+(S^3, \Gamma)$ and 
$\MCG(S^3, \Gamma)$ are not finite while $\TSG_+(S^3, \Gamma)$ and 
$\TSG(S^3, \Gamma)$ are finite, of order $8$. 

\smallskip
\item If $\Gamma$ is the union of a Hopf link and its upper or lower tunnel,

\smallskip
$\MCG_+(S^3, \Gamma) \cong
\TSG_+(S^3, \Gamma) \cong
\Integer / 2 \Integer \times \Integer / 2 \Integer$,

\smallskip
$\MCG(S^3, \Gamma) \cong \TSG(S^3, \Gamma) \cong
\Integer / 2 \Integer \ltimes \MCG_+(S^3, \Gamma) \cong D_4.$

\smallskip
\item If $\Gamma$ is the union of a nontrivial $2$-bridge link, 
except the Hopf link, and its upper or lower tunnel,

\smallskip
$\MCG_+(S^3, \Gamma) =
\MCG(S^3, \Gamma)
\cong
\TSG_+(S^3, \Gamma) =
\TSG(S^3, \Gamma) \\
\cong
\Integer / 2 \Integer \times \Integer / 2 \Integer$.

\smallskip
\item Otherwise,

\smallskip
$\MCG_+(S^3, \Gamma) =
\MCG(S^3, \Gamma) \cong
\TSG_+(S^3, \Gamma) =
\TSG(S^3, \Gamma) \cong \Integer / 2 \Integer$.
\end{enumerate}

\smallskip
\item Finally, let $\Gamma$ be a spatial bouquet of two circles.

\smallskip
\begin{enumerate}

\smallskip
\item If $\Gamma$ is a plane graph,

\smallskip
$\MCG_+(S^3, \Gamma) \cong
\Integer / 2 \Integer \ltimes (\Integer / 2 \Integer \times \Integer)$,

\smallskip
$\MCG(S^3, \Gamma) = \Integer / 2 \Integer \ltimes \MCG_+(S^3, \Gamma)$,

\smallskip
$\TSG_+(S^3, \Gamma) =
\TSG(S^3, \Gamma) =
\Aut (\Gamma) \cong
\Integer / 2 \Integer \ltimes
( \Integer / 2 \Integer \times \Integer / 2 \Integer )$.

\smallskip
\item Otherwise,

\smallskip
$\MCG_+(S^3, \Gamma) =
\MCG(S^3, \Gamma) \cong
\TSG_+(S^3, \Gamma) =
\TSG(S^3, \Gamma) \cong \Integer / 2 \Integer$.

\end{enumerate}
\end{enumerate}
\end{proposition}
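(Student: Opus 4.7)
The plan is to perform a case analysis matching the taxonomy of the proposition, combining three inputs: the abstract structure of $\Aut(\Gamma)$ for each combinatorial type, the explicit computation of $\MCG(S^3, \Gamma)$ recorded in Lemma \ref{lem:stablizer subgroups}, and the isomorphism criteria supplied by Theorems \ref{thm:the mcg and the positive tsg} and \ref{thm:involution and the symmetry group of a spatial graph}. First I would record, from elementary combinatorics, that a $\theta$-curve has $\Aut(\Gamma) \cong \Integer / 2 \Integer \times D_3$ (the $D_3$-factor permutes the three edges and the $\Integer / 2 \Integer$-factor swaps the two vertices), while a handcuff and a bouquet of two circles both have $\Aut(\Gamma) \cong \Integer / 2 \Integer \ltimes (\Integer / 2 \Integer \times \Integer / 2 \Integer)$, with the normal Klein four-group reversing each loop independently and the outer $\Integer / 2 \Integer$ interchanging the two loops.

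Next, for each spatial graph $\Gamma = K \cup \tau$ in the list I would verify the geometric hypotheses of Theorems \ref{thm:the mcg and the positive tsg} and \ref{thm:involution and the symmetry group of a spatial graph}. Atoroidality of $E(\Gamma)$ is automatic because the tunnel-one hypothesis forces $E(\Gamma)$ to be a genus two handlebody. Type II and Type III spheres can be excluded combinatorially in every subcase by a direct check on how the handful of edges of $\Gamma$ can be distributed across the two sides of the prospective sphere: any such distribution forces one side to consist of a single point, single edge, or single loop, in violation of the definition. A Type I sphere at a vertex $v$ carrying a loop $e$ exists precisely when $e$ can be isolated in a $3$-ball whose boundary meets $\Gamma$ only at $v$, and a linking-number argument shows that this holds exactly in the plane embedding cases (1)(a), (2)(a), (3)(a); in every other subcase the loop is either a nontrivial knot or nontrivially linked with some component of $\Gamma \setminus e$, so no such sphere exists.

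Third, in every non-planar subcase the three conditions (atoroidal, no Type I/II/III sphere, not a plane graph) all hold, so Theorems \ref{thm:the mcg and the positive tsg} and \ref{thm:involution and the symmetry group of a spatial graph} yield $\MCG_+(S^3, \Gamma) \cong \TSG_+(S^3, \Gamma)$ and $\MCG(S^3, \Gamma) \cong \TSG(S^3, \Gamma)$; the explicit isomorphism type comes from Lemma \ref{lem:stablizer subgroups}, which separates the Hopf, nontrivial $2$-bridge, and remaining subcases. For the planar subcases I would realize every element of $\Aut(\Gamma)$ by an orientation-preserving self-homeomorphism of $(S^3, \Gamma)$ coming from a rotation of the embedding $2$-sphere or a $180^\circ$ rotation of $S^3$ interchanging its two sides, giving $\TSG_+(S^3, \Gamma) = \TSG(S^3, \Gamma) = \Aut(\Gamma)$. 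The reflection across the embedding $2$-sphere is an orientation-reversing involution of $S^3$ fixing $\Gamma$ pointwise, which produces the extra $\Integer / 2 \Integer$-factor in the semidirect product $\MCG(S^3, \Gamma) \cong \Integer / 2 \Integer \ltimes \MCG_+(S^3, \Gamma)$. In the plane handcuff and bouquet an additional infinite-cyclic summand appears in $\MCG_+(S^3, \Gamma)$ coming from the Dehn twist along the Type I sphere.

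The main obstacle is pinning down the precise semidirect structure $\MCG_+(S^3, \Gamma) \cong \Integer / 2 \Integer \ltimes (\Integer / 2 \Integer \times \Integer)$ for the plane handcuff and the plane bouquet: it is not enough to observe that the Type I twist has infinite order, one must show that this twist together with the finite planar symmetries generates $\MCG_+(S^3, \Gamma)$ with exactly the claimed relations. I would derive this from the boundary-pattern analysis of $(E(\Gamma), \underline{\underline{m(\Gamma)}})$ developed in the proof of Theorem \ref{thm:the mcg and the positive tsg}, recording how the finite plane symmetries act on the characteristic annulus along which the twist is supported. A secondary but routine task is to confirm, in the $2$-bridge subcases (1)(b), (2)(b) and (2)(c), that the realized subgroup of $\Aut(\Gamma)$ matches the prediction of Lemma \ref{lem:stablizer subgroups}; this is a direct consequence of the classical symmetry classification of $2$-bridge knots and links.
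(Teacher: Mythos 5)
Your overall strategy coincides with the paper's: the proof there is exactly the combination of Lemma \ref{lem:stablizer subgroups}, Theorems \ref{thm:the mcg and the positive tsg} and \ref{thm:involution and the symmetry group of a spatial graph}, the computation of $\Aut(\Gamma)$ for the three combinatorial types, and a separate direct treatment of the planar cases (where the reflection in the embedding sphere and, for the handcuff and bouquet, the twist along the Type I sphere account for the extra factors). Your exclusion of Type II and III spheres by edge-counting, and your use of atoroidality of the genus two handlebody exterior, are exactly the intended ``some computations.''

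There is, however, one concrete error: you assert that a Type I sphere exists ``exactly in the plane embedding cases (1)(a), (2)(a), (3)(a).'' Case (1)(a) is the plane $\theta$-curve, and a $\theta$-curve admits no Type I sphere for \emph{any} embedding: a Type I sphere meets $\Gamma$ in a single vertex and separates $\Gamma$ into two nonempty pieces, so its existence requires a cut vertex, and a $\theta$-curve has none (removing either vertex leaves the three open edges joined at the other vertex; it also has no loops, so your phrase ``a vertex $v$ carrying a loop $e$'' does not even apply). The paper states explicitly that only the plane handcuff and the plane bouquet admit Type I spheres. Your claim is not a harmless slip, because if the plane $\theta$-curve admitted a Type I sphere then the ``only if'' direction of Theorem \ref{thm:the mcg and the positive tsg} would give $\MCG_+(S^3,\Gamma)\ncong\TSG_+(S^3,\Gamma)$ (indeed $\MCG_+(S^3,\Gamma)$ would be infinite by the twist along that sphere), contradicting the finiteness asserted in (1)(a). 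The correct route for (1)(a) is the opposite of the one you set up: all hypotheses of Theorem \ref{thm:the mcg and the positive tsg} \emph{are} satisfied there, so $\MCG_+(S^3,\Gamma)\cong\TSG_+(S^3,\Gamma)=\Aut(\Gamma)\cong\Integer/2\Integer\times D_3$, and only the orientation-reversing part requires the planar reflection, via Theorem \ref{thm:involution and the symmetry group of a spatial graph} and Lemma \ref{pro:symmetry groups and topological symmetry groups}. The rest of your argument, including the admitted bookkeeping needed to pin down $\Integer/2\Integer\ltimes(\Integer/2\Integer\times\Integer)$ for the plane handcuff and bouquet, is in line with the paper.
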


\noindent {\bf Acknowledgments.}
The authors wish to express their gratitude 
to Erica Flapan for posing the question and 
to Darryl McCullough for providing them
his very helpful idea on the topic.
Part of this work was carried out while the
second-named author was visiting
the Mathematisches Forschungsinstitut Oberwolfach.
The institute kindly offered the stay
while his University was affected
by the 2011 Tohoku earthquake.
He is grateful to the MFO and its staffs for
the offer, financial support and warm hospitality.
He would also like to thank the Leibniz Association for
travel support. 
Finally, the authors are grateful to the anonymous referee 
for his or her helpful comments that improved the presentation.

\end{document}